\title{Constructing $G$-algebras}
\author{Kevin De Laet}
\address{Department of Mathematics, University of Antwerp \\ 
 Middelheimlaan 1, B-2020 Antwerp (Belgium) \\ {\tt kevin.delaet2@uantwerpen.be}}
\date{}
\tikzset{
  vertice/.style={circle,draw=black},
  decoration={markings,mark=at position 0.5 with {\arrow{>}}}
}
\newcommand{\wis}[1]{{\text{\em \usefont{OT1}{cmtt}{m}{n} #1}}}
\newcommand{\C}{\mathbb{C}}
\newcommand{\F}{\mathbb{F}}
\newcommand{\N}{\mathbb{N}}
\newcommand{\Z}{\mathbb{Z}}
\newcommand{\A}{\mathbb{A}}
\newcommand{\PP}{\mathbb{P}}
\newcommand{\V}{\mathbf{V}}
\newcommand{\T}{\mathbb{T}}
\newcommand{\mA}{\mathcal{A}}
\theoremstyle{plain}
\newtheorem{theorem}{Theorem}[subsection]
\newtheorem{lemma}[theorem]{Lemma}
\newtheorem{proposition}[theorem]{Proposition}
\newtheorem{corollary}[theorem]{Corollary}
\newtheorem{remark}[theorem]{Remark}
\newtheorem{example}[theorem]{Example}
\newtheorem{mydef}[theorem]{Definition}
\DeclareMathOperator{\Grass}{Grass}
\DeclareMathOperator{\Emb}{Emb}
\DeclareMathOperator{\Aut}{Aut}
\DeclareMathOperator{\Der}{Der}
\DeclareMathOperator{\Hom}{Hom}
\DeclareMathOperator{\im}{Im}
\DeclareMathOperator{\Ann}{Ann}
\numberwithin{equation}{section}
\begin{document}
\begin{abstract}
In this article we define $G$-algebras, that is, graded algebras on which a reductive group $G$ acts as gradation preserving automorphisms. Starting from a finite dimensional $G$-module $V$ and the polynomial ring $\C[V]$, it is shown how one constructs a sequence of projective varieties $\V_k$ such that each point of $\V_k$ corresponds to a graded algebra with the same decomposition up to degree $k$ as a $G$-module. After some general theory, we apply this to the case that $V$ is the $n+1$-dimensional permutation representation of $S_{n+1}$, the permutation group on $n+1$ letters.
\end{abstract}
\maketitle
\tableofcontents
\keywords{Regular algebras, representations of reductive groups}
\footnote{\textit{2000 Mathematics Subject Classification:}16W20}
\section{Introduction}
In noncommutative projective algebraic geometry, one studies graded algebras with good properties, most of the time of a homological nature. The most famous examples of such algebras are given by the Sklyanin algebras, that is, quadratic algebras of global dimension $n$ associated to a degree $n$ elliptic curve $E$ embedded in $\PP^{n-1}$ and a point $\tau \in E$. The Heisenberg group $H_n$ of order $n^3$ acts on these Sklyanin algebras, see for example \cite{odesskii1989sklyanin}, such that their degree 1 part is isomorphic to the Schr\"odinger representation $V$ of $H_n$ associated to some primitive $n$th root of unity $\omega$ and the relations are isomorphic to $V \wedge V$ as $H_n$-representation. These algebras also have the same Hilbert series as the polynomial ring in $n$ variables and are very well behaved.
\par Similarly, another famous class of noncommutative algebras with Hilbert series $\frac{1}{(1-t)^n}$ is given by quantum polynomial rings (see for example \cite{2015point}) defined by
$$
\C \langle x_1,\ldots,x_n \rangle /(x_i x_j -q_{ij} x_j x_i) , 1\leq i < j \leq n, q_{ij}\in \C^*.
$$
In this case, the $n$-dimensional torus group $\T_n = (\C^*)^n$ acts on these algebras with $V = \oplus_{i=1}^n \chi_{e_i}$ and again we have that the relations are isomorphic to $ V \wedge V$ as $\T_n$-representation.
\par These examples motivate the following questions:
\begin{itemize}
\item starting from a reductive group $G$ and $A$ a positively graded, connected algebra on which $G$ acts as gradation preserving automorphisms, can we construct new graded algebras $B$ such that $A \cong B$ as graded $G$-module? The most interesting case will be when $A = \C[V]$ with $V$ a finite dimensional representation of $G$.
\item Can we use the properties of $G$ or $V$ to say something about the constructed algebras? Are some isomorphic, how do point modules behave, $\ldots$
\end{itemize}
This paper shows some general theory regarding such algebras. One of the purposes of this paper is the study of subvarieties of grassmannians and maps between such varieties which are constructed this way, the key observation being that if $W \subset V $ is a subrepresentation of $V$ with $W \cong \oplus_{i=1}^k S_i^{\oplus e_i}$ and $V \cong \oplus_{i=1}^l S_i^{\oplus a_i}$ with obviously $k \leq l$, $e_i \leq a_i$  and $\dim \Hom_G(S_i,S_j) = \delta^i_j$, $\delta^i_j$ being the Kronecker delta, then
\begin{align*}
\Emb_G(W,V)&=\{\xymatrix{W \ar[r]^-f & V}| f \text{ is } G-\text{linear and injective}\}/\Aut_G(W)
\\ &\cong \prod_{i=1}^k \wis{M}^s_{a_i \times e_i}(\C)/\wis{GL}_{e_i}(\C)  \cong\prod_{i=1}^k \Grass(e_i,a_i)
\end{align*}
where $\wis{M}^s_{a\times b}(\C)$ are the $a \times b$-matrices of maximal rank. These objects are studied in Section 2. An example of maps between grassmannians that can occur will be studied in section 3 for quantum polynomial rings and 3-dimensional Sklyanin algebras.
\par In Section 4 we give some well known constructions of quadratic algebras with Hilbert series $\frac{1}{(1-t)^n}$ that can occur as $G$-deformations of the polynomial ring $\C[V]$ (the terminology will be explained in Section 2), like twisting with an automorphism or making Ore extensions. In the last section this theory is applied for $G = S_{n+1}$ the symmetric group of order $(n+1)!$ and its natural permutation representation coming from the action on $\{0,\ldots,n\}$. It will turn out that the `good' algebras will be either skew polynomial rings that will be twists of the commutative polynomial ring or differential polynomial rings.
\begin{remark}
Part of Section 2 was already proven in \cite{de2014character}, but the author feels that for completeness sake these results should be included in this paper.
\end{remark}
\subsection{Conventions and notations}
All the algebras we will study will be positively graded, connected and finitely generated in degree 1 $\C$-algebras, that is, 
$$
A = \C \oplus A_1 \oplus A_2 \oplus \ldots, A_i A_j = A_{i+j}.
$$
The last condition is equivalent to $A$ being generated over $\C$ by $V=A_1$ with $\dim V < \infty$. As such, we can write $A$ as
$$
A = T(V)/R \text{ with } T(V) = \C \oplus V \oplus (V \otimes V) \oplus \ldots, R \text{ homogeneous}
$$
By $\C[V]$ we denote the algebra $T(V)/(V \wedge V)$, the polynomial ring in $n$ variables with $\dim V = n$.
\par $G$ will always be a reductive group acting on $V$. In addition, we will assume that $G$ acts faithfully on $V$. If $g,h \in G$, then $[g,h] = ghg^{-1}h^{-1}$ is the commutator of $g$ and $h$.
\par If $x,y \in A$ with $A$ an algebra, then $[x,y] = xy-yx$ and $\{x,y\} = xy+yx$.
\par For $f_1,\ldots,f_k \in \C[V]$ for $V$ a vector space, $\V(f_1,\ldots,f_k)$ will be the Zariski closed subset of $\PP^{n-1}$ (if the elements are homogeneous) or $\A^n$ determined by $f_1,\ldots,f_k$, it will be clear from the context whether we are working in projective space or affine space.
\par $\T_k$ will be the $k$-dimensional torus $(\C^*)^k$. Let $\Z^k = \oplus_{i=1}^k \Z e_i$ and $\mathbf{a} \in \Z^k$, then $\chi_{\mathbf{a}}$ will be the character of $\T_k$ defined by 
$$
\chi_{\mathbf{a}}(t_1,\ldots,t_k) = t_1^{a_1}t_2^{a_2}\ldots t_k^{a_k}.
$$
\par For $\F$ a field, $\wis{GL}_n(\F)$, $\wis{PGL}_n(\F)$, $\wis{SL}_n(\F)$ or $\wis{PSL}_n(\F)$ will denote the general linear group, respectively the projective general linear group, special linear group or the projective special linear group. $\T_n$ will be the maximal torus in $\wis{GL}_n(\F)$ embedded as diagonal matrices and $\F^*$ will correspond to the scalar matrices. If $V$ is a finite dimensional $\F$-vector space, then we will also write $\wis{GL}(V)$, $\wis{PGL}(V)$, $\wis{SL}(V)$ or $\wis{PSL}(V)$.
\par $H_n$ will be the Heisenberg group of order $n^3$ for $n \geq 2$. This group is defined as
$$
H_n = \langle e_1,e_2|\langle e_1^n = e_2^n = [e_1,e_2]^n=1, [e_1,e_2] \text{ central}\rangle.
$$
$H_n$ fits in a short exact sequence
$$
\xymatrix{1 \ar[r]& \Z_n \ar[r] & H_n \ar[r] & \Z_n \times \Z_n \ar[r] & 1}.
$$
As such, the 1-dimensional representations of $H_n$ are labelled by $\chi_{a,b}$, with $\chi_{a,b}(e_1) = \omega^a$ and $\chi_{a,b}(e_2) = \omega^b$ for $\omega$ a fixed primitive $n$th root of unity. If $\omega$ is a primitive $n$th root of unity, then $H_n$ has a simple $n$-dimensional representation $V = \oplus_{i=0}^n \C x_i$ with the action defined by
$$
e_1 \cdot x_i = x_{i-1}, \qquad e_2 \cdot x_i = \omega^i x_i.
$$
A quick calculation shows that $[e_1,e_2]$ acts by multiplication with $\omega$. In particular, if $n = p$ prime, then all simple representations are described by these $p^2$ 1-dimensional characters and $(p-1)$ $p$-dimensional representations. For a complete description of simple representations of $H_n$ for $n\geq 2$ arbitrary, see \cite{grassberger2001note}.
\section{$G$-deformations}
\subsection{General theory}
\begin{mydef}
Let $G$ be a reductive group. We call a graded connected algebra $A$, finitely generated by degree 1 elements, a $G$-algebra if $G$ acts on it by gradation preserving automorphisms.
\end{mydef}
This implies that there exists a representation $V$ of $G$ such that $T(V)/R \cong A$ with $R$ a graded ideal of $T(V)$, which is itself a $G$-subrepresentation of $T(V)$. From now on, $A$ is a $G$-algebra.
\par In this setting, as $G$ is reductive, each graded component $A_k$ has a decomposition in simple $G$-representations.
\begin{mydef}
Let $A$ be a $G$-algebra. Then $B$ is a $G$-deformation of $A$ up to degree $k$ if $B$ is a $G$-algebra and we have 
$$
\forall 0 \leq i \leq k: A_i \cong B_i \text{ as $G$-representations.}
$$
If $\forall k \in \N: A_k \cong B_k$, then we call $B$ a $G$-deformation of $A$. 
\end{mydef}
We will always assume that, if $A_1 \cong V$, then $B_1 \cong V$ as $G$-representations. Equivalently, the relations we will deform are always of degree larger than or equal to 2.
\par If $A$ is a $G$-algebra, then $A$ determines for each $k \geq 2$ a short exact sequence of $G$-morphisms
$$
\xymatrix{0 \ar[r]& \ker(\phi_k) \ar[r] & T(V)_k \ar[r]^-{\phi_k} & A_k \ar[r] & 0}
$$
Let $A_k \cong \oplus_{i_k=1}^{n_k} S_{i_k}^{ e_{i_k}}$ and $T(V)_k = \oplus_{i_k=1}^{n_k} S_{i_k}^{ a_{i_k}}$ as $G$-representations with $e_{i_k} \leq a_{i_k}$, with some $e_{i_k}$ possibly equal to 0.
Then $\ker(\phi_k) \cong \oplus_{i_k=1}^{n_k} S_{i_k}^{ f_{i_k}}$ with $f_{i_k} = a_{i_k}-e_{i_k}$. Therefore, if $B$ is a $G$-deformation of $A$ up to degree $k$, then $B$ determines a $G$-subrepresentation of $T(V)_j$ for each $2\leq j  \leq k$, isomorphic to $\ker(\phi_j)$. Such subrepresentations are determined by
$$
\Emb_G(\ker(\phi_j),V^{\otimes j}) = \left\{ \xymatrix{\ker(\phi_j)	 \ar[r]^-f & V^{\otimes j}}| f \text{ injective, $G$-linear} \right\}/\Aut_G(\ker(\phi_j)).
$$
Due to Schur's lemma, using the decompositions from above, this set is the same as
$$
\prod_{i_j = 1}^{n_j} \wis{M}^s_{a_{i_j}\times f_{i_j}}(\C)/\wis{GL}_{f_{i_j}}(\C) \cong \prod_{i_j = 1}^{n_j} \Grass(f_{i_j},a_{i_j}).
$$
\par We see that a $G$-deformation $B$ up to degree $k$ of an algebra $A$ determines a unique point in 
$$
\prod_{j=2}^k \prod_{i_j = 1}^{n_j} \Grass(f_{i_j},a_{i_j}).
$$
\begin{theorem}
The $G$-deformations up to degree $k$ of $A$ are parametrized by a projective variety.
\end{theorem}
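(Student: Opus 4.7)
The plan is to realize the set of $G$-deformations of $A$ up to degree $k$ as a closed subvariety of the projective variety
\[
X := \prod_{j=2}^k \prod_{i_j=1}^{n_j}\Grass(f_{i_j},a_{i_j})
\]
already exhibited above, and then invoke that a closed subvariety of a projective variety is projective. First I would reformulate a $G$-deformation $B$ up to degree $k$ as the data of the $G$-subrepresentations $R_j := \ker(T(V)_j \twoheadrightarrow B_j)\subseteq T(V)_j$ for $2\leq j \leq k$; the hypothesis $B_j \cong A_j$ as $G$-representations forces $R_j \cong \ker(\phi_j)$, so the construction preceding the statement provides an injective map from the set of these deformations (identified through their relation spaces up to degree $k$) into $X$.

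The image is not all of $X$: a tuple $(R_2,\dots,R_k)$ comes from a genuine deformation precisely when the algebra $B:=T(V)/(R_2+\cdots+R_k)$ still satisfies $B_j \cong A_j$ for each $2\leq j \leq k$. Setting
\[
W_j := \sum_{i=2}^{j-1}\sum_{a+b=j-i}V^{\otimes a}\otimes R_i\otimes V^{\otimes b}\subseteq T(V)_j,
\]
the degree-$j$ piece of the ideal generated by the relations equals $W_j + R_j$. Since $R_j \cong \ker(\phi_j)$ already carries the correct $G$-decomposition, the requirement $B_j \cong A_j$ is equivalent to the inclusion $W_j \subseteq R_j$. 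Hence the candidate parameter space is
\[
\V_k := \{(R_2,\dots,R_k)\in X \mid W_j \subseteq R_j\text{ for all }2\leq j \leq k\}.
\]

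The substantive step is to argue that each elementary condition $V^{\otimes a}\otimes R_i\otimes V^{\otimes b}\subseteq R_j$ cuts out a closed subset of $X$. I would do so by forming, on $X$, the morphism of vector bundles obtained by tensoring the tautological subbundle $\mathcal{R}_i$ on the $i$-th factor with the trivial bundles $V^{\otimes a}$ and $V^{\otimes b}$, composing with the multiplication map $V^{\otimes a}\otimes V^{\otimes i}\otimes V^{\otimes b}\to V^{\otimes j}$, and projecting onto the quotient bundle $V^{\otimes j}/\mathcal{R}_j$ coming from the $j$-th factor. The vanishing locus of such a morphism of vector bundles is a closed degeneracy locus, describable concretely by Pl\"ucker-type equations. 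Intersecting these closed conditions over all decompositions $j=i+a+b$ with $2\leq i<j$ and all $2 \leq j \leq k$ yields $\V_k$ as a closed subvariety of $X$.

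Finally, the assignment $(R_2,\dots,R_k)\mapsto T(V)/(R_2+\cdots+R_k)$ gives a bijection between $\V_k$ and the $G$-deformations of $A$ up to degree $k$ (two deformations being identified iff they share the same relation spaces in degrees $\leq k$), and since $X$ is projective, so is $\V_k$. The main technical obstacle I anticipate is the combinatorial bookkeeping needed to verify that the single condition $W_j\subseteq R_j$ really captures the requirement $B_j\cong A_j$ for all $j\leq k$ simultaneously; once this is granted, the closedness reduces to standard incidence statements on Grassmannians and the projectivity follows immediately.
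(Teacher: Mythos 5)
Your proposal is correct and follows essentially the same route as the paper: both realize $\V_k$ as the closed subvariety of $\prod_{j=2}^{k}\prod_{i_j=1}^{n_j}\Grass(f_{i_j},a_{i_j})$ cut out by the incidence conditions $V^{\otimes a}\otimes R_i\otimes V^{\otimes b}\subseteq R_j$, the paper organizing these conditions by induction on $k$ and asserting closedness outright, while you impose them all at once and justify closedness via degeneracy loci of tautological-bundle maps. Your added detail (the equivalence $B_j\cong A_j \Leftrightarrow W_j\subseteq R_j$ and the bundle argument) is sound and merely makes explicit what the paper leaves implicit.
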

\begin{proof}
The proof is by induction. For any degree $k$, put
$$
T(V)_k \cong \oplus_{i_k=1}^{n_k} S_{i_k}^{ a_{i_k}}, A_k\cong \oplus_{i_k=1}^{n_k} S_{i_k}^{ e_{i_k}}
$$
and we allow $e_{i_k}=0$. Let $f_{i_k} = a_{i_k}-e_{i_k}$ be the multiplicities of the simple representations in $\ker(\phi_k)$ with as above $\phi_k$ being the natural projection map
$$
\xymatrix{ T(V)_k \ar[r]^-{\phi_k}& A_k}.
$$
Let $\mathbf{V}_k$ be the set parametrizing deformations up to degree $k$ of $A$.
\par First consider $k=2$. Then from the above discussion it follows that the deformations up to degree 2 are parametrized by $\prod_{i_2 = 1}^{n_2} \Grass(f_{i_2},a_{i_2})$, which is clearly a projective variety.
\par Assume now that $\mathbf{V}_{k-1}$ is a projective subvariety of 
$$
\prod_{j=2}^{k-1} \prod_{i_j=1}^{n_j} \Grass(f_{i_j},a_{i_j}).
$$
Then a point $(P_2,\ldots,P_{k-1},P_k) \in \prod_{j=2}^{k} \prod_{i_j=1}^{n_j} \Grass(f_{i_j},a_{i_j})$ with $P_i \subset T(V)_i$ is an element of $\mathbf{V}_{k}$ if and only if $(P_1,\ldots,P_{k-1}) \in \mathbf{V}_{k-1}$ and for each $2\leq i \leq k-1$ and each $0\leq l \leq k-i$, $V^{\otimes l} \otimes P_i \otimes V^{\otimes k-i-l} \subset P_k$. This is clearly a closed condition, so $\mathbf{V}_{k-1}$ is indeed closed.
\end{proof}
As in the theorem, let $\mathbf{V}_k$ be the variety parametrizing $G$-deformations of $A$ up to degree $k$.
\par We have natural morphisms coming from projection maps between grassmannians
$$
\xymatrix{\ldots \ar[r]^-{\psi_{k+1}} &\V_k \ar[r]^-{\psi_k}& \V_{k-1}\ar[r]^-{\psi_{k-1}}& \ldots}
$$
such that $\V = \varprojlim \V_k$ parametrizes $G$-deformations of $A$.
\begin{mydef}
We say that a connected variety $Z$ parametrizes $G$-deformations up to degree $k$ of a $G$-algebra $A$ if $Z$ can be embedded in $V_k$ for some $k \geq 2$ by some morphism $\xymatrix{Z \ar[r]^-\alpha & V_k}$ such that
the point corresponding to the algebra $A$ is in $\im(\alpha)$. If for each point $z \in Z$, the algebra $A^z=T(V)/(\alpha(z))$ has the property that
$$
\forall k \in \N: (A^z)_k \cong A_k \text{ as $G$-representations},
$$
then we say that $Z$ parametrizes $G$-deformations of $A$.
\end{mydef}
\begin{proposition}
Let $C$ be a smooth projective curve parametrizing $G$-deformations up to degree $k$ such that there exists an open subset $U \subset C$ parametrizing $G$-deformations of $A$, let $\xymatrix{C\ar@{^{(}->}[r]^-{\alpha_k} & \V_k}$ be the corresponding embedding. Then $C$ naturally parametrizes $G$-deformations of $A$, by which we mean that there exists a natural embedding $\xymatrix{C \ar@{^{(}->}[r]& \V}$ extending $\alpha_k$.
\end{proposition}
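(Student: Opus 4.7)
The plan is to extend the embedding $\alpha_k$ one degree at a time, exploiting the projectivity of each $\V_j$ (given by the previous theorem) together with the standard fact that every rational map from a smooth projective curve to a projective variety extends uniquely to a morphism. Concretely, I would construct a compatible system of morphisms $\alpha_j : C \to \V_j$ for all $j \geq k$, then pass to the inverse limit to obtain $\alpha : C \to \V = \varprojlim \V_j$.

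The first step is to produce, on the open subset $U$, morphisms $\beta_j : U \to \V_j$ for every $j \geq k$ that are compatible with the projections $\psi_{j+1}$ and satisfy $\beta_k = \alpha_k|_U$. For $u \in U$, the degree-$j$ relations of the algebra $A^u$ are generated by the degree $\leq k$ relations encoded by $\alpha_k(u)$, so as $u$ varies the corresponding subspace of $T(V)_j$ varies algebraically in $u$. Because $A^u$ is a full $G$-deformation, the $G$-isotypic multiplicities of this family of subspaces are constant across $U$, equal to the prescribed $f_{i_j}$; this is exactly what is needed to produce a morphism into $\prod_{i_j}\Grass(f_{i_j},a_{i_j})$, whose image necessarily lies in the closed subvariety $\V_j$.

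The second step invokes properness: since $C$ is a smooth projective curve and $U$ is a nonempty open subset, each rational map $\beta_j$ extends uniquely to a morphism $\alpha_j : C \to \V_j$. The identities $\psi_{j+1}\circ\beta_{j+1} = \beta_j$ on the dense open $U$ extend, by separatedness of $\V_j$, to $\psi_{j+1}\circ\alpha_{j+1} = \alpha_j$ on all of $C$; the same argument forces $\alpha_k$ to coincide with the originally given embedding. The resulting compatible family $(\alpha_j)_{j \geq k}$ induces a morphism $\alpha : C \to \V$ with $\pi_k \circ \alpha = \alpha_k$, where $\pi_k : \V \to \V_k$ is the canonical projection; since $\alpha_k$ is a closed embedding of a smooth projective curve, $\alpha$ itself must be a closed embedding.

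The main obstacle I foresee lies in the first step: one has to argue that $\beta_j$ is actually a morphism rather than a merely set-theoretic map. This is where the assumption that $U$ parametrizes \emph{full} $G$-deformations is essential; it pins down the isotypic multiplicities of the degree-$j$ relation space to the fixed values $f_{i_j}$ throughout $U$, and without this rigidity the multiplicities could jump and the induced map into the product of Grassmannians would fail to be regular. Once this rigidity is granted, everything that remains is routine application of the extension theorem for rational maps from smooth curves and the use of separatedness to propagate the compatibilities from $U$ to $C$.
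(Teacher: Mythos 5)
Your proposal is correct and follows essentially the same route as the paper: extend the rational maps $\alpha_j$ (defined on the dense open $U$) to all of $C$ using the extension theorem for rational maps from a smooth projective curve to a projective variety, propagate the compatibilities $\psi_{j+1}\circ\alpha_{j+1}=\alpha_j$ from $U$ to $C$ by density and separatedness, and conclude injectivity from that of $\alpha_k$ before passing to $\V=\varprojlim \V_j$. The only difference is that you spell out why the maps $\beta_j$ are regular on $U$ (constancy of the isotypic multiplicities), a point the paper's proof leaves implicit.
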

\begin{proof}
We have the following commutative diagram
$$
\xymatrix{\ldots \ar[r]^-{\psi_{k+2}} & \V_{k+1}\ar[r]^-{\psi_{k+1}} & \V_k \ar[r]^-{\psi_{k}}& \ldots\\
 &  & C \ar@{^{(}->}[u]^-{\alpha_k} \ar@{-->}[lu]^-{\alpha_{k+1}} &}
$$
with $\alpha_k$ an embedding and $\alpha_{k+1}$ a rational morphism. Due to \cite[Proposition 2.1]{ArithmeticElliptic2009}, $\alpha_{k+1}$ can be extended to a morphism of $C$ into $\V_{k+1}$ which we also denote as $\alpha_{k+1}$. As $\psi_{k+1}\circ \alpha_{k+1}$ coincides with $\alpha_k$ on an open (and hence dense) subset of $C$, they coincide on $C$. As $\alpha_{k}$ is an injection, $\alpha_{k+1}$ is also an injection.
\par By induction, we get commuting triangles $\forall k \geq 2$ with $\alpha_k$ an embedding for all $k$ large enough, so we indeed have an embedding of $C$ into $\V$.
\end{proof}
\begin{example}
Let $A = \C[x,y,z]/(\{x,y\}\{y,z\},\{z,x\})$, put $V = \C x \oplus \C y \oplus \C z$ and consider the group $G$ generated by $S_3\subset \wis{GL}_3(\C)$ as permutation matrices and the order 3 element
$$\begin{bmatrix}
1&0&0 \\0 & \omega & 0 \\ 0 & 0 & \omega^2
\end{bmatrix}.
$$
Then $G \cong H_3 \rtimes \Z_2$, with the action of $\Z_2= \langle t \rangle$ defined by $t \cdot e_1 = e_1^{-1}$, $t \cdot e_2 = e_2^{-1}$. Then $V \otimes V$ decomposes as $G$-representation as $W^{\oplus 2} \oplus P$ and $V \wedge V \cong P$, so the $G$-deformations of $A$ are parametrized by $\PP^1$. These algebras parametrized by $\V_2 \cong \PP^1$ have relations
$$
\begin{cases}
A(yz+zy)+Bx^2,\\
A(zx+xz)+By^2,\\
A(xy+yx)+Bz^2.
\end{cases}
$$ 
As in \cite{de2015graded}, generically these are Artin-Schelter regular graded Clifford algebras. There are 4 points where the corresponding algebra doesn't have the correct Hilbert series:
$$
S=\{[0:1],[1:1],[1:\omega],[1:\omega^2]\} \text{ with } \omega^3=1, \omega \neq 1.
$$
For example, in $[0:1]$, we have that $[\{x,y\},z]$ and cyclic permutations of this relation are not implied by the relations $x^2=y^2=z^2$, although they are implied by all other relations on $\PP^1$. So a natural extension of the rational map
$$
\xymatrix{\PP^1 \ar@{^{(}->}[r]^{\alpha_3}& \V_3}
$$
to the point $[0:1]$ is by adding the cyclic permutations of $[\{x,y\},z]$ as extra relations.
\end{example}
A trivial consequence of a connected variety $Z$ parametrizing $G$-deformations is that for each $i \in \N$ the function 
\begin{gather*}
\xymatrix{Z \ar[r]^-{\beta_i}& \N},
\xymatrix{z \ar@{|->}[r] &\dim_\C A^z_i}
\end{gather*}
is constant. A natural question to ask is the following: if $Z$ parametrizes $G$-deformations up to degree $k$ for some $k \in \N, k \geq 2$ and $\forall i \in \N: \beta_i(z) = \dim_\C A_i$, does $Z$ parametrize $G$-deformations of $A$? 
\begin{lemma}
Let $A$ be a $G$-algebra with $\xymatrix{ T(V) \ar@{->>}[r]^-{p} & A}$ the natural projection map. Let $A_k = \oplus_{i_k=1}^{n_k} S_{i_k}^{e_{i_k}}$ be the decomposition of $A_k$ into simple $G$-representations and similarly $T(V)_k = \oplus_{i_k=1}^{n_k} S_{i_k}^{a_{i_k}}$ with naturally $a_{i_k} \geq e_{i_k}$. Then there exists a subspace $W \subset T(V)_k$ such that $W \cong A_k$ as $G$-representations and $p|_W$ is an isomorphism of $G$-representations.
\end{lemma}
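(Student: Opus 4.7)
The plan is to invoke complete reducibility of finite-dimensional representations of the reductive group $G$. Since $p$ is both graded and $G$-equivariant, it restricts in each degree to a surjective $G$-module morphism $p_k: T(V)_k \twoheadrightarrow A_k$. Writing $K = \ker(p_k)$, we get a short exact sequence
$$
\xymatrix{0 \ar[r] & K \ar[r] & T(V)_k \ar[r]^-{p_k} & A_k \ar[r] & 0}
$$
of finite dimensional $G$-representations. The entire lemma amounts to producing a $G$-equivariant section of $p_k$; the image of such a section will be the desired $W$.

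To produce the section, I would decompose $T(V)_k$ into its $G$-isotypic components $T(V)_k \cong \bigoplus_{i_k=1}^{n_k} S_{i_k}^{\oplus a_{i_k}}$ (as in the statement). Because $K$ is a $G$-subrepresentation, Schur's lemma forces $K$ to respect this decomposition: $K = \bigoplus_{i_k} K^{(i_k)}$ with $K^{(i_k)} \subset S_{i_k}^{\oplus a_{i_k}}$, and the multiplicity of $S_{i_k}$ in $K^{(i_k)}$ must be $f_{i_k} = a_{i_k} - e_{i_k}$. Working inside each isotypic component separately, $K^{(i_k)} \subset S_{i_k}^{\oplus a_{i_k}}$ corresponds under the identification $\Hom_G(S_{i_k}, T(V)_k) \otimes S_{i_k} \cong S_{i_k}^{\oplus a_{i_k}}$ to a subspace of the multiplicity space of dimension $f_{i_k}$; choose any linear complement of this subspace inside the multiplicity space and let $W^{(i_k)}$ be the corresponding $G$-submodule of $T(V)_k$, so $W^{(i_k)} \cong S_{i_k}^{\oplus e_{i_k}}$.

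Finally, set $W = \bigoplus_{i_k} W^{(i_k)}$. By construction $W \oplus K = T(V)_k$, so $p_k|_W$ is both injective (its kernel lies in $W \cap K = 0$) and surjective (as $p_k(W) = p_k(W + K) = A_k$), and it is $G$-equivariant by restriction. Therefore $p|_W$ is an isomorphism of $G$-representations $W \xrightarrow{\sim} A_k$, in particular $W \cong A_k$ as $G$-modules.

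There is essentially no obstacle: the only substantive input is that $G$ is reductive, which is exactly what guarantees that every $G$-stable subspace of a finite dimensional $G$-representation has a $G$-stable complement. One could also write the proof abstractly in one line by invoking that $\Ext^1_G(A_k, K) = 0$ for reductive $G$, but the isotypic decomposition has the advantage of making the bookkeeping with the multiplicities $e_{i_k}$, $f_{i_k}$, $a_{i_k}$ transparent and meshing with the notation used throughout the section.
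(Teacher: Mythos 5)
Your proof is correct and follows essentially the same route as the paper: reduce via Schur's lemma to the multiplicity spaces, where the claim becomes the standard linear-algebra fact that a surjection splits; you simply spell out the splitting by choosing a complement to the kernel inside each isotypic component, while the paper leaves that last step implicit.
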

\begin{proof}
It follows from Schur's lemma that the map $\xymatrix{ T(V)_k \ar@{->>}[r]^-{p_k} & A_k}$ is a surjective element of $\oplus_{i_k=1}^{n_k}\Hom_G(S_{i_k}^{a_{i_k}},S_{i_k}^{ e_{i_k}})\cong\oplus_{i_k=1}^{n_k} \Hom(\C^{a_{i_k}},\C^{e_{i_k}})$. There it reduces to a statement of linear maps, which follows from standard linear algebra.
\end{proof}
\begin{theorem}
Let $Z$ be an irreducible variety parametrizing $G$-deformations up to degree $k$ of $A = T(V)/(R)$. For a point $z \in Z$, let $A^z = T(V)/(\alpha(z))$ be the corresponding associative algebra with $R^z \in \mathbf{V}_k$. If $\forall z \in Z: H_{A^z}(t) = H_{A}(t)$, then we have
$$
\forall i \in \N: (A^z)_i \cong A_i \text{ as $G$-representations}.
$$
\end{theorem}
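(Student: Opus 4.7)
The plan is to show that for every $i \geq 2$, the $G$-representation type of $(R^z)_i \subset V^{\otimes i}$ — where $(R^z)$ is the two-sided ideal of $T(V)$ generated by $\alpha(z) \in \V_k$ — does not depend on $z \in Z$. Since $(A^z)_i \cong V^{\otimes i}/(R^z)_i$ as $G$-modules and $A = A^{z_0}$ for some $z_0 \in Z$ by hypothesis, this yields the theorem.

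For $i \leq k$ nothing needs to be shown. For $i > k$ I would use the explicit description
$$
(R^z)_i = \sum_{\substack{j+l+m=i \\ 2 \leq l \leq k}} V^{\otimes j} \otimes \alpha(z)_l \otimes V^{\otimes m} \subset V^{\otimes i},
$$
in which each $\alpha(z)_l$ depends algebraically on $z$ via its coordinates in the Grassmannian factor of $\V_k$. The dimension of this sum is only upper semicontinuous in $z$ a priori, so we initially obtain only a rational map into a disjoint union of Grassmannians. The Hilbert series hypothesis is precisely what removes this issue: it forces $\dim(R^z)_i = \dim V^{\otimes i} - \dim A_i$ to be constant on $Z$, so $z \mapsto (R^z)_i$ is in fact a regular morphism $\phi_i : Z \to \Grass(d_i, V^{\otimes i})$ with $d_i := \dim V^{\otimes i} - \dim A_i$.

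Because each $\alpha(z)_l$ is $G$-stable and multiplication in $T(V)$ is $G$-equivariant, $\phi_i$ factors through the $G$-fixed subvariety $\Grass(d_i,V^{\otimes i})^G$. Writing $V^{\otimes i} \cong \bigoplus_j S_j^{a_{j,i}}$, Schur's lemma yields the standard decomposition into connected components
$$
\Grass(d_i,V^{\otimes i})^G \cong \bigsqcup_{(m_j)} \prod_j \Grass(m_j,a_{j,i}),
$$
ranging over tuples $(m_j)$ with $\sum_j m_j \dim S_j = d_i$ and $m_j \leq a_{j,i}$, and each component corresponds to exactly one isomorphism class of $d_i$-dimensional $G$-subrepresentation of $V^{\otimes i}$. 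Since $Z$ is irreducible, $\phi_i(Z)$ must lie in a single such component, so all $(R^z)_i$ — and therefore all $(A^z)_i$ — are pairwise isomorphic as $G$-representations, agreeing in particular with $R_i$ and $A_i$.

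The only non-routine point is the passage from rational to regular map for $\phi_i$, and this is handled by exactly the Hilbert series hypothesis via the rank-stratification argument above; the decomposition of the fixed-point Grassmannian is classical once $G$ is reductive, so no further obstacle arises.
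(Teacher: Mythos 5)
Your proof is correct, but it takes a genuinely different route from the paper's. The paper argues pairwise and by contradiction: taking $l$ minimal with $(A^x)_l \not\cong (A^y)_l$, it invokes a lemma producing a $G$-subspace $W \subset T(V)_l$ with $p^x|_W$ a $G$-isomorphism onto $(A^x)_l$, observes that the locus of $z$ for which $p^z|_W$ is still surjective is open (the constant Hilbert series enters here, just as in your argument, to guarantee $\dim (A^z)_l$ is constant so that surjectivity forces isomorphism), repeats this at $y$ with some $W'$, and then uses irreducibility of $Z$ to find a point in the intersection of the two open sets, giving $(A^x)_l \cong W \cong W' \cong (A^y)_l$. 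You instead globalize: constancy of $\dim (R^z)_i$ turns $z \mapsto (R^z)_i$ into a morphism to $\Grass(d_i,V^{\otimes i})$, $G$-stability places its image in the fixed locus, and Schur's lemma splits that fixed locus into finitely many disjoint closed pieces $\prod_j \Grass(m_j,a_{j,i})$ indexed by multiplicity vectors, so connectedness of $Z$ pins down a single piece and hence a single isomorphism type. What your version buys: it uses only connectedness rather than irreducibility, so it proves Corollary \ref{cor:constchar} directly instead of by chaining through irreducible components, and it exhibits the classifying morphism explicitly; the price is the (standard, but worth stating carefully) fact that a constant-dimensional, algebraically varying family of subspaces defines a morphism to the Grassmannian, which the paper sidesteps by only ever needing openness of surjectivity of $p^z|_W$ for a fixed $W$. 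Two minor points: the dimension of $\sum V^{\otimes j}\otimes\alpha(z)_l\otimes V^{\otimes m}$ can only drop on closed subsets (rank conditions are closed), so "upper semicontinuous" should be the opposite direction --- this does not affect your argument since constancy is all you use; and, exactly like the paper, you tacitly identify $A$ with $A^{z_0}$ for the point $z_0$ mapping to the point of $\V_k$ determined by $A$, which is justified because the canonical surjection $A^{z_0}\twoheadrightarrow A$ between graded algebras with equal Hilbert series is an isomorphism.
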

\begin{proof}
For $x \in Z$, let $\xymatrix{ T(V) \ar@{->>}[r]^-{p^x} & A^x}$ denote the natural homomorphism. Assume that $(A^x)_l \not \cong (A^y)_l$ for some $l > k$. Then we can choose $l$ minimal with this property. According to the previous lemma we can find a subspace $W$ of $T(V)_l$ such that $p^x(W) = (A^x)_l$ and $W \cong (A^x)_l$ as $G$-representations.
\par Using the fact that $\forall z \in Z: H_{A^z}(t) = H_{A}(t)$, there exists a Zariski open subset $U$ with $x \in U$ of $Z$ such that
$$
\forall z \in U: p^z(W) = (A^z)_l.
$$
As $p^z$ is a $G$-morphism, we have $W \cong (A^z)_l$.
\par Similarly there exists an open subset $U'$ with $y \in U'$ and a subspace $W'$ of $T(V)_l$ such that $W' \cong (A^y)_l$ and $((p^z)_l)|_{W'}$ a $G$-isomorphism $\forall z \in U'$. As $Z$ was irreducible, there exists a point $a \in U \cap U'$. But then it follows that
$$
(A^x)_l \cong W \cong (A^a)_l \cong W' \cong (A^y)_l
$$
which is a contradiction.
\end{proof}
A trivial consequence of this theorem follows.
\begin{corollary}
Let $Z$ be a connected variety that parametrizes graded algebras $A^z, z \in Z$ with constant Hilbert series. Then for every 2 points $x,y \in Z$ and every $k \in \N$, we have isomorphisms $(A^x)_k \cong (A^y)_k$ as $G$-representations.
\label{cor:constchar}
\end{corollary}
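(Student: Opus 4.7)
The strategy is to reduce to the irreducible case already settled by the previous theorem. First I would decompose $Z$ into its (finitely many) irreducible components $Z = Z_1 \cup \cdots \cup Z_m$. For each component $Z_i$, the hypothesis that $H_{A^z}(t)$ is constant on $Z$ restricts to the same constancy on $Z_i$, and since $Z_i$ is irreducible the previous theorem applies and yields that $(A^z)_k \cong (A^{z'})_k$ as $G$-representations for all $z,z' \in Z_i$ and all $k \in \N$.

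The next step is a topological chain argument to pass between different components. I would consider the graph $\Gamma$ whose vertices are $Z_1, \ldots, Z_m$ and whose edges are the pairs $(Z_i, Z_j)$ with $Z_i \cap Z_j \neq \emptyset$. Each $Z_i$ is irreducible, hence connected; so if $\Gamma$ were disconnected, $Z$ itself would split as a disjoint union of non-empty closed subsets, contradicting the connectedness hypothesis. Consequently, any two points $x \in Z_{i_0}$ and $y \in Z_{i_r}$ can be joined by a sequence of components $Z_{i_0}, Z_{i_1}, \ldots, Z_{i_r}$ with $Z_{i_j} \cap Z_{i_{j+1}} \neq \emptyset$ for $0 \leq j \leq r-1$.

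To conclude, I would pick witness points $z_j \in Z_{i_j} \cap Z_{i_{j+1}}$ and concatenate the isomorphisms coming from the irreducible case,
$$
(A^x)_k \cong (A^{z_0})_k \cong (A^{z_1})_k \cong \cdots \cong (A^{z_{r-1}})_k \cong (A^y)_k,
$$
each step being legitimate because the two consecutive points lie in a common irreducible component. There is no genuine obstacle here: once the previous theorem is available, the argument is pure bookkeeping. The only point requiring any care is the elementary topological observation that connectedness of $Z$ forces connectedness of the incidence graph of its irreducible components, and this is immediate from the argument sketched above.
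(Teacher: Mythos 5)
Your argument is correct and is exactly the intended route: the paper states the corollary as a ``trivial consequence'' of the preceding theorem without writing out a proof, and the missing content is precisely your reduction to irreducible components (applying the theorem on each component $Z_i$ with base point an arbitrary $z_0 \in Z_i$, which is legitimate since $Z_i \subset \V_k$ still parametrizes $G$-deformations up to degree $k$ of $A^{z_0}$ and the Hilbert series is constant there) followed by the chain argument through the connected incidence graph of components. Nothing further is needed.
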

\subsection{Symmetries on $\V_k$}
Assume that $A = T(V)/R$ is a $G$-algebra and let $H = N_{\wis{GL}(V)}(G) = \{h \in \wis{GL}(V)|hgh^{-1} \in G \}$. Let $H'$ be the maximal subgroup of $H$ such that $A$ is also a $H'$-algebra, so in particular $G \subset H'$. We have a morphism $\xymatrix{H' \ar[r] & \Aut(G)}$ defined by $h\mapsto \varphi_h,\varphi_h(g) = hgh^{-1}$. This implies that we can twist every representation $\xymatrix{G \ar[r]^f& \wis{GL}(W)}$ with $\varphi_h$
$$
\xymatrix{G \ar[r]^-{\varphi_h} & G \ar[r]^-f & \wis{GL}(W)},
$$
which defines a new representation $f\circ \varphi_h$ of $G$, possibly non-isomorphic to $W$ (except if $V = W$ of course). Denote this new $G$-representation by $W^h$. It is obvious that if $W$ is simple, then $W^h$ is also simple. However, if $A$ is an $H'$-algebra, this implies that if $S^{e}$ with $S$ simple is a $G$-subrepresentation of $R_k$ for some $k \geq 2$ with $e$ the multiplicity of $S$ in $R_k$, then $(S^h)^e$ for each $h \in H$ is also a subrepresentation of $R_k$ and $e$ is the multiplicity of $S^h$ in $R_k$.
\begin{proposition}
Let $A=T(V)/R$ be a $G$-algebra and $H,H'$ as before. Then $H'$ acts on $\V_k$, the variety parametrizing $G$-deformations up to degree $k$ such that points in the same orbit correspond to isomorphic algebras.
\label{prop:symVk}
\end{proposition}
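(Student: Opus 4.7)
The plan is to define the action of $H'$ on the ambient product of Grassmannians by transporting the natural $\C$-linear action of $H' \subset \wis{GL}(V)$ from $V$ to each tensor power $T(V)_j$, then to verify three things: (i) for each $h \in H'$ and each point $(P_2,\ldots,P_k) \in \V_k$, the translate $(h\cdot P_2,\ldots,h\cdot P_k)$ again lies in the product of the correct Grassmannians, (ii) it satisfies the compatibility conditions cutting out $\V_k$, and (iii) the algebras at two points of the same orbit are isomorphic.

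For (i), since $h$ normalizes $G$, the identity $g \cdot (h \cdot p) = h \cdot (\varphi_{h^{-1}}(g) \cdot p)$ shows that $h$ carries any $G$-subrepresentation $P_j \subset T(V)_j$ to another $G$-subrepresentation of the same dimension, and that if $P_j^{(S)}$ denotes the $S$-isotypic component of $P_j$, then $h \cdot P_j^{(S)}$ lies inside the $S^{h^{-1}}$-isotypic component of $T(V)_j$. Because $A$ is an $H'$-algebra, the subrepresentation $R_j \subset T(V)_j$ is $h$-stable, which forces the multiplicity of $S$ in $\ker(\phi_j)$ to equal the multiplicity of $S^{h^{-1}}$ in $\ker(\phi_j)$ (this is precisely the observation recorded just before the proposition). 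Consequently the $G$-decomposition of $h \cdot P_j$ matches that of $\ker(\phi_j)$, so that $h \cdot P_j$ belongs to $\prod_{i_j} \Grass(f_{i_j}, a_{i_j})$; one only has to note that, within this product, $h$ acts by permuting the Grassmannian factors (according to the permutation of isomorphism classes $S \mapsto S^{h^{-1}}$ occurring in degree $j$) and, on each factor, via the linear map induced by the action of $h$ on the ambient isotypic component of $T(V)_j$. Both operations are morphisms of varieties, so the action of $H'$ is algebraic.

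For (ii), recall that the closed conditions defining $\V_k$ inside the product of Grassmannians are of the form $V^{\otimes l} \otimes P_i \otimes V^{\otimes k-i-l} \subset P_k$. Since $h$ acts on $T(V)$ as a degree-preserving algebra automorphism and fixes $V$ setwise, we obtain
$$
h \cdot \bigl(V^{\otimes l} \otimes P_i \otimes V^{\otimes k-i-l}\bigr) = V^{\otimes l} \otimes (h \cdot P_i) \otimes V^{\otimes k-i-l},
$$
and the inclusion is transported to $V^{\otimes l} \otimes (h\cdot P_i) \otimes V^{\otimes k-i-l} \subset h\cdot P_k$. Hence $\V_k$ is preserved by $H'$. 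For (iii), the automorphism $h$ of $T(V)$ sends the ideal generated by $(P_2,\ldots,P_k)$ onto the ideal generated by $(h\cdot P_2,\ldots,h\cdot P_k)$, and therefore descends to an isomorphism between the two quotient algebras.

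The main subtlety is really in step (i): one must be careful that applying $h$ does not move a subspace out of the ambient product of Grassmannians, and this fails without the hypothesis that $A$ itself is an $H'$-algebra. Once the multiplicity-matching observation is in hand, the verification of (ii) and (iii) is formal and follows immediately from the fact that $H'$ acts by algebra automorphisms on $T(V)$ preserving the grading and the subspace $V$.
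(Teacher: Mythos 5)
Your proposal is correct and takes essentially the same route as the paper: the action is given by letting $h$ act on the relation spaces (equivalently, re-choosing generators $y_i = h x_i$ and twisting the $G$-action by $\varphi_h$, which is how the paper phrases it), and the key point in both arguments is the multiplicity observation preceding the proposition, which holds because $A$ is an $H'$-algebra. Your steps (i)--(iii) merely make explicit the well-definedness on the product of Grassmannians, the stability of the closed conditions cutting out $\V_k$, and the isomorphism of the quotient algebras, all of which the paper leaves implicit.
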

\begin{proof}
Let $h \in H'$, $B$ a $G$-deformation of $A$. Define a new action of $G$ on $B$ by $g \cdot v = hgh^{-1}v$, then $B$ is a $G$-algebra under this action, with the degree 1 part isomorphic to $V$. Taking as generators of $B$ the elements $y_i=h x_i$, for some basis $x_1,\ldots,x_m$ of $V$, we can decompose the relations of $B$ as $G$-modules with respect to the generators $y_i$ in the same way as we did for the generators $x_1,\ldots,x_m$. But these 2 decompositions will be the same as $H'$ acts on the set of simple representations of $G$, but $A$ was also a $H'$-algebra. 
\end{proof}
One would expect that the centralizer of $G$ in $\wis{GL}(V)$ acts trivially on $\V_k$, but this will not be the case in general. However, if $V$ is a simple representation, then this is obviously true.
\par If $A=\C[V]$ or $A = \wedge V$, then $H=H'$ and we have an action of $H$ on $\V_k$ for each $k\geq 2$.
\begin{example}
Let $G = \T_2$ and $V = \chi_{e_1} \oplus \chi_{e_2}$. Then the $\T_2$-deformations of $\C[V]$ are parametrized by $\PP^1$ with $[a:b] \in \PP^1$ corresponding to the algebra
$$
T(V)/(ax_1x_2-bx_2x_1).
$$
The normalizer of $\T_2 \subset \wis{GL}(V)$ is the semidirect product $\T_2 \rtimes \Z_2$. $\T_2$ acts trivially on this moduli space, so the action boils down to the action of $\Z_2$ on $\PP^1$ defined by $s \cdot [a:b] = [b:a]$, which indeed give isomorphic algebras.
\label{ex:1dquantum}
\end{example}
\section{Some constructions}
\subsection{Twisting}
The main $G$-deformations up to degree $k$ we want to study are those of the polynomial ring $\C[V] = T(V)/(V \wedge V)$. Although the decomposition of $V \wedge V$ and $V \otimes V$ can be completely arbitrary, we can bound the dimension from below using twisting.
\begin{mydef}
Let $B$ be a graded algebra and $\beta \in \Aut(B)$ an automorphism that preserves the gradation. Then the twist $B^\beta$ is defined as the graded associative algebra with underlying set the elements of $B$ and multiplication rule
$$
\forall a \in B_k, b \in B_l: a *_\beta b = a\beta^k(b).
$$
\end{mydef}
\begin{proposition}
Let $\V_k$ be the variety parametrizing $G$-deformations up to degree $k$ of $\C[V]$ and let $V \cong \oplus_{i=1}^n S_i^{\oplus e_i}$ be the decomposition of $V$ in simple $G$-representations. 
We then have $$\dim \V_k \geq -1+\sum_{i=1}^n e_i^2.$$
In particular, $V_k$ contains a subvariety isomorphic to 
$$
\wis{PGL}_{e_1,\ldots,e_n}(\C) = \left(\prod_{i=1}^n \wis{GL}_{e_i}(\C)\right)/\C^*
$$
\end{proposition}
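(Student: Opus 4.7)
The plan is to realize the claimed subvariety as the image of a map constructed via the twist construction just introduced. By Schur's lemma, the $G$-equivariant linear endomorphisms of $V$ form the algebraic group
$$\Aut_G(V)\cong\prod_{i=1}^n\wis{GL}_{e_i}(\C),$$
of dimension $\sum_i e_i^2$. Each $\beta\in\Aut_G(V)$ extends uniquely to a graded $G$-equivariant algebra automorphism of $\C[V]$, also denoted $\beta$, and I associate to such a $\beta$ the twist $\C[V]^\beta$.

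First I would verify that $\C[V]^\beta$ is a $G$-deformation of $\C[V]$ for every $\beta\in\Aut_G(V)$. Since twisting only alters the multiplication, and since $\beta$ commutes with the $G$-action on each $\C[V]_k$ by assumption, the original $G$-action remains an action by algebra automorphisms on $\C[V]^\beta$. Consequently $(\C[V]^\beta)_k\cong\C[V]_k$ as $G$-representations for every $k$, and $\beta$ determines a point of $\V_k$. A short iteration of the twist rule shows that the multiplication map $V^{\otimes k}\to(\C[V]^\beta)_k$ factors as
$$V^{\otimes k}\xrightarrow{\;1\otimes\beta\otimes\beta^2\otimes\cdots\otimes\beta^{k-1}\;}V^{\otimes k}\longrightarrow\C[V]_k,$$
so the degree $j$ relations ($2\le j\le k$) of $\C[V]^\beta$ are obtained by applying $(1\otimes\beta\otimes\cdots\otimes\beta^{j-1})^{-1}$ to the kernel of the commutative multiplication. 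This assignment is algebraic in $\beta$ and yields a morphism of varieties $\Phi_k\colon\Aut_G(V)\to\V_k$.

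Next I would determine the fibers of $\Phi_k$. Scalars $c\cdot\mathrm{id}\in\C^*$ act by a global scalar on each $V^{\otimes k}$ and hence preserve every subspace, so the central subgroup $\C^*$ lies in each fiber. Conversely, if $\Phi_k(\beta_1)=\Phi_k(\beta_2)$ then already in degree two the corresponding subspaces of $V\otimes V$ coincide, and setting $\gamma=\beta_2\beta_1^{-1}$ this translates to $(1\otimes\gamma)(V\wedge V)=V\wedge V$. A direct coefficient-by-coefficient computation in any basis of $V$ shows that such a $\gamma$ is forced to be a scalar multiple of the identity. Consequently $\Phi_k$ descends to an injective morphism $\overline{\Phi_k}\colon\wis{PGL}_{e_1,\ldots,e_n}(\C)\hookrightarrow\V_k$, whose image is an irreducible subvariety of dimension $\sum_i e_i^2-1$. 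This yields both the claimed lower bound on $\dim\V_k$ and the explicit subvariety.

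The only nontrivial step is the last computation, showing that any $\gamma\in\wis{GL}(V)$ with $(1\otimes\gamma)(V\wedge V)=V\wedge V$ must be a scalar. This is a short linear-algebra exercise but is what guarantees that $\Phi_k$ has exactly one-dimensional fibers, so that it factors through $\wis{PGL}_{e_1,\ldots,e_n}(\C)$. Everything else is formal.
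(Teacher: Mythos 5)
Your proposal is correct and follows essentially the same route as the paper: both realize the lower bound by twisting $\C[V]$ by elements of $\Aut_G(V)\cong\prod_{i=1}^n\wis{GL}_{e_i}(\C)$, identify the degree-two relations of the twist as $(1\otimes\beta^{-1})(V\wedge V)$, and reduce injectivity modulo scalars to the fact that $(1\otimes\gamma)(V\wedge V)=V\wedge V$ forces $\gamma\in\C^*$. The paper likewise states this last fact without proof (as a property of $\C[V]$), and it does hold by the short argument you allude to, e.g.\ the condition says $x\,\gamma(y)=y\,\gamma(x)$ in $S^2V$ for all $x,y$, so by the UFD property $\gamma(x_i)=c\,x_i$ with a common scalar $c$.
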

\begin{proof}
Let $\alpha \in \Aut_G(V) \cong \prod_{i=1}^n\wis{GL}_{e_i}(\C)$. Then the map
$$
\xymatrix{f_\alpha=Id \otimes \alpha^{-1}:V \otimes V \ar[r]& V\otimes V}
$$
is also a $G$-isomorphism, and $f_\alpha(V \wedge V)$ is the vector space of defining relations of $\C[V]^\alpha$. Therefore $f_\alpha(V \wedge V)\cong V \wedge V$. As $\C[V]$ has the property
$$
\forall \alpha,\beta \in \Aut(\C[V]) : f_\alpha(V \wedge V) = f_\beta(V \wedge V) \Rightarrow \exists \lambda \in \C^*: \alpha = \lambda \beta,
$$
we have that each $\alpha \in  \left(\prod_{i=1}^n\wis{GL}_{e_i}(\C)\right)/\C^*$ defines a $G$-deformation of $\C[V]$.
\end{proof}
It can be the case that $G$-deformations up to to degree 2 of $\C[V]$ are (generically) twists of $\C[V]$, as the next example will show.
\begin{example}
Let $G=\mathbb{T}_2=\C^* \times \C^*$ be the 2-dimensional torus and let $V = \chi_{e_1} \oplus \chi_{e_2}$. We then have the decomposition
\begin{gather*}
V \otimes V \cong \chi_{2e_1} \oplus \chi_{2e_2} \oplus \chi_{e_1 + e_2}^2,\\
V \wedge V \cong \chi_{e_1 + e_2}.
\end{gather*}
So the $G$-deformations up to degree 2 of $\C[V]$ are parametrized by $\PP^1$. The twists of $\C[V]$ that commute with the action of $\mathbb{T}_2$ is $\mathbb{T}_2$ itself, so the twists give a 1-dimensional family of $G$-deformations of $\C[V]$.
\par In fact, every point of $\PP^1$ defines a $G$-deformation of $\C[V]$, as for each point~$p=[a:b]\in \PP^1$ and $A^p = \C\langle x,y \rangle/(axy-byx)$, we have $H_{A^p}(t) = \frac{1}{(1-t)^2}$ and using Corollary \ref{cor:constchar}, we conclude that
$$
\forall p \in \PP^1 \forall k  \in \N: (A^p)_k \cong \C[V]_k \cong \oplus_{i=0}^k \chi_{(k-i,i)}. 
$$
\end{example}
If we are just interested whether we can make twists of $A$ that are also $G$-algebras but not necessarily $G$-deformations, then we can improve this theorem.
\begin{theorem}
Let $A=T(V)/(R)$ be a $G$-algebra and let $\phi \in \Aut_{\C^*}(A)$ such that
$$
\forall g \in G: [\phi,g] \in Z(\wis{GL}(V)).
$$
Then the twist $A^\phi$ is also a $G$-algebra with $A^\phi \cong V$.
\end{theorem}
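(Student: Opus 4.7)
The plan is to keep the underlying graded vector space of $A^\phi$ and construct a rescaled $G$-action on it. The naive action inherited from $A$ will fail to respect $*_\phi$ by a scalar factor depending on the degrees, and the correction is a one-cocycle built from the commutator.

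First, I would extract a character of $G$ out of the hypothesis. By assumption $[\phi,g]$ lies in $Z(\wis{GL}(V))$, so it acts on $V=A_1$ by some scalar $\lambda_g \in \C^*$; since $[\phi,g]$ is a graded algebra automorphism of $A$ and $A$ is generated in degree $1$, it acts on $A_k$ as multiplication by $\lambda_g^k$. Next I would verify that $g\mapsto \lambda_g$ is a group homomorphism $G \to \C^*$: the identity
\[
[\phi,gh] = [\phi,g]\cdot g[\phi,h]g^{-1}
\]
combined with the observation that $[\phi,h]$ is central in $\wis{GL}(V)$ (hence commutes with $g$) yields $\lambda_{gh}=\lambda_g\lambda_h$. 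Rewriting the commutator relation as $\phi g = \lambda_g^k g\phi$ on $A_k$ and iterating gives $\phi^j g = \lambda_g^{jk} g\phi^j$ on $A_k$; equivalently, for $b\in A_l$, $g\phi^j(b) = \lambda_g^{-jl}\phi^j(g(b))$.

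Now I would define the new $G$-action on $A^\phi$ by
\[
\tilde g \cdot a = \lambda_g^{k(k-1)/2}\,g(a),\qquad a\in (A^\phi)_k = A_k.
\]
The character property $\lambda_{gh}=\lambda_g\lambda_h$ gives $\widetilde{gh} = \tilde g \tilde h$ immediately, so this is a $G$-action on the underlying graded vector space. To see that each $\tilde g$ is a graded algebra automorphism of $A^\phi$, I would compute both sides of the multiplicativity for $a\in A_k$, $b\in A_l$: using the commutation relation above,
\[
\tilde g(a *_\phi b) = \lambda_g^{\binom{k+l}{2}-kl}\,g(a)\,\phi^k(g(b)),
\]
\[
(\tilde g\cdot a) *_\phi (\tilde g\cdot b) = \lambda_g^{\binom{k}{2}+\binom{l}{2}}\,g(a)\,\phi^k(g(b)),
\]
and these match thanks to the binomial identity $\binom{k+l}{2} - kl = \binom{k}{2} + \binom{l}{2}$. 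On degree $1$ the rescaling factor is $\lambda_g^{0}=1$, so $\tilde g$ and $g$ agree there and $(A^\phi)_1\cong V$ as $G$-representations.

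The hard part is spotting the right cocycle $k\mapsto \lambda_g^{\binom{k}{2}}$; once it is written down, the two checks (group action, twisted multiplicativity) reduce respectively to the multiplicativity of $\lambda$ and to the binomial identity. The centrality hypothesis enters precisely in making $\lambda$ a character: without it, $g[\phi,h]g^{-1}$ need not equal $[\phi,h]$ and the correction factor would not be a global function of $g$ alone.
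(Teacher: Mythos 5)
Your proof is correct: the commutation rule $g\,\phi^j(b)=\lambda_g^{-jl}\phi^j(g(b))$ for $b\in A_l$, the character property $\lambda_{gh}=\lambda_g\lambda_h$, and the identity $\binom{k+l}{2}=\binom{k}{2}+\binom{l}{2}+kl$ do exactly what you claim, so $\tilde g\cdot a=\lambda_g^{\binom{k}{2}}g(a)$ is an action of $G$ on $(A,*_\phi)$ by gradation-preserving automorphisms that agrees with the original action in degree $1$. The paper arrives at the same scalar $\lambda^{\binom{k}{2}}$ but packages it differently: it presents the twist as $T(V)/(R^\phi)$ with $R^\phi_k=\phi_k^{-1}(R_k)$, where $\phi_k(x_{i_1}\otimes\cdots\otimes x_{i_k})=x_{i_1}\phi(x_{i_2})\cdots\phi^{k-1}(x_{i_k})$, keeps the \emph{original} $G$-action on $T(V)$, and only checks that $R^\phi$ is $G$-stable via $g\cdot\phi_k^{-1}(f)=\lambda_g^{\binom{k}{2}}\phi_k^{-1}(g\cdot f)$; since a subspace is insensitive to scalar factors, the paper never needs $\lambda$ to be a character, which is the extra verification you must supply for working directly on the twisted product. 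In compensation your construction is more explicit: under the identification of $T(V)/(R^\phi)$ with $(A,*_\phi)$ induced by the maps $\phi_k$, the paper's action corresponds precisely to your rescaled action, and your formula makes visible that $(A^\phi)_k$ is $A_k$ twisted by the character $\lambda^{\binom{k}{2}}$ --- which is exactly the phenomenon illustrated by the example following the theorem, where $A^{e_2}\not\cong A$ as graded $G$-modules even though both are $G$-algebras with the same degree $1$ part. (If you want the action to be rational for $G$ an algebraic group, note that $g\mapsto[\phi,g]$ is a morphism into the scalars, so $\lambda$ is an algebraic character and your rescaled action is indeed algebraic.)
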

\begin{proof}
Let $R^\phi$ be the relations of $A^\phi$ and let $\phi_k$ be the vector space automorphism on $V^{\otimes k}$ defined by 
$$
\phi_k(x_{i_1}\ldots x_{i_k})= x_{i_1} \phi(x_{i_2})\ldots\phi^{k-1}(x_{i_k})
$$
with $x_j \in V$ and extending this linearly. From the construction of $A^\phi$, it follows that 
$$
f \in R_k \Leftrightarrow \phi^{-k}(f) \in R^\phi_k
$$
We need to show that, for any $g \in G, g\cdot \phi^{-k}(f)\in R^\phi_k$. Let $[\phi,g]=\lambda \in \C^*$. We then have
$$
g \cdot \phi^{-k}(f) = \lambda^m \phi^{-k}(g\cdot f), m=\binom{k}{2}
$$
and as $g\cdot f \in R_k$, we have $g \cdot \phi^{-k}(f) \in R^\phi_k$.
\end{proof}
However, as the next example will show, it generally is not true that $A^\phi \cong A$ as graded $G$-module.
\begin{example}
Let $D_4 = H_2$ the Heisenberg group of order 8 (which is the same as the dihedral group of order 8) and take the Schr\"odinger representation of $D_4$ $V = \C x \oplus \C y$, defined by the matrices
$$
e_1 \mapsto \begin{bmatrix}
0 & 1 \\ 1 & 0
\end{bmatrix}, e_2 \mapsto \begin{bmatrix}
1 & 0 \\ 0 & -1
\end{bmatrix}.
$$
Then the induced map $\xymatrix{H_2 \ar[r]& \wis{PGL}_2(\C)}$ has as kernel the group generated by $[e_1,e_2]$ and $H_2/[e_1,e_2] \cong \Z_2 \times \Z_2$. So for example $e_2$ satisfies the condition of the previous theorem. Therefore, let $A = \C[V]$, then $A^{e_2} \cong \C\langle x,y \rangle/(xy+yx)$ ($A^{e_2}$ is the twist of $A$ by the automorphism $e_2$, not the subalgebra of $A$ fixed by $e_2$).
\par But $\C[x,y] \not\cong \C\{x,y\}$ as $H_2$-representations, as $\C[x,y] \cong \chi_{1,1}$ while $\C\{x,y\}\cong \chi_{0,1}$. So $A \not\cong A^{e_2}$.
\end{example}
\subsection{Ore extensions}
Another way to make $G$-deformations of an algebra $A$ is by using Ore extensions. Recall that $A[t;\sigma,\delta]$ with $\sigma \in \Aut(A)$ and $\delta \in \Der^{\sigma}(A)$.
\begin{proposition}
Let $A = T(V)/(R)$ be a quadratic $G$-algebra, $\sigma \in \Aut_G(A)$ and $\delta \in \Der^\sigma(A)$ such that
\begin{align}
\forall g \in G, \forall x \in A_1: \delta(g(x)) = \chi^*(g) g(\delta(x)).
\label{req:ore}
\end{align}
Then $A[t;\sigma,\delta]$ is a $G$-algebra such that $\deg(t) = 1$ and $\C t \cong \chi$. Conversely, if $A[t;\sigma,\delta]$ is a $G$-algebra such that $\C t\cong \chi$, then $\sigma \in \Aut_G(A)$ and $\delta \in \Der^\sigma(A)$ fulfils the above property.
\end{proposition}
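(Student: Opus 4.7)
The plan is to realize $A[t;\sigma,\delta]$ as the quotient $T(V\oplus \C t)/I$, where $I$ is the ideal generated by the defining relations $R$ of $A$ together with the Ore relations
$$\rho_a = t a - \sigma(a)\, t - \tilde\delta(a), \qquad a \in A_1 = V,$$
with $\tilde\delta(a) \in V\otimes V$ a chosen lift of $\delta(a)\in A_2$. Since $A$ is quadratic and $\deg(t)=1$, each $\rho_a$ is homogeneous of degree $2$. Extend the $G$-action from $V$ to $V\oplus \C t$ by $g\cdot t = \chi(g)\, t$; then $A[t;\sigma,\delta]$ is a $G$-algebra with $\C t \cong \chi$ if and only if $I$ is a $G$-subrepresentation of $T(V\oplus \C t)$.

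Since $R$ is $G$-invariant by hypothesis, the whole problem reduces to checking whether $g\cdot \rho_a \in I$ for every $g \in G$ and $a \in V$. I would expand
$$g\cdot \rho_a = \chi(g)\, t\, g(a) - g(\sigma(a))\,\chi(g)\, t - g(\delta(a)),$$
and then use the Ore relation for $g(a)$ to replace $t\, g(a)$ by $\sigma(g(a))\, t + \delta(g(a))$. Collecting terms modulo $R$, this becomes
$$g\cdot \rho_a \;\equiv\; \chi(g)\bigl(\sigma(g(a)) - g(\sigma(a))\bigr)\, t \;+\; \bigl(\chi(g)\,\delta(g(a)) - g(\delta(a))\bigr) \pmod{I}.$$
By the PBW basis for Ore extensions, which exhibits $A[t;\sigma,\delta]$ as a free left $A$-module on $\{1,t,t^2,\ldots\}$, the right-hand side vanishes if and only if both coefficients vanish independently. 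This forces $\sigma\circ g = g\circ\sigma$, i.e.\ $\sigma\in\Aut_G(A)$, and $\delta(g(a)) = \chi(g)^{-1}g(\delta(a)) = \chi^*(g)\, g(\delta(a))$, which is precisely the stated condition.

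A single computation handles both directions. Assuming the two conditions, each $g\cdot \rho_a$ reduces to $0$, so $I$ is $G$-stable and $A[t;\sigma,\delta]$ becomes a $G$-algebra of the claimed form; conversely, if $A[t;\sigma,\delta]$ is already a $G$-algebra with $\C t \cong \chi$, then the Ore relations must be $G$-invariant and the same identity forces both conditions. The only step requiring real care is the coefficient separation: one must rule out the possibility that the $A_2 \cdot t$-part and the $A_2$-part of $g\cdot \rho_a$ cancel each other, and this is exactly what the PBW freeness of an Ore extension provides. Everything else is a mechanical unwinding of the Ore relation.
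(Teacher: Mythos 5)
Your proof is correct and follows essentially the same route as the paper: the forward direction is the same direct computation of $g$ applied to the Ore relations, and your coefficient-separation step via PBW freeness of $A[t;\sigma,\delta]$ over $A$ is just a repackaging of the paper's observation that the relations live in the direct sum $\C t\otimes V\oplus V\otimes\C t\oplus A_2$ and form a $G$-subrepresentation iff the component maps $t\otimes v\mapsto\sigma(v)\otimes t$ and $t\otimes v\mapsto\delta(v)$ are $G$-morphisms.
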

\begin{proof}
The relations of an Ore extension of a quadratic algebra $A$ are of the form
$$
\forall x \in A_1: tx - \sigma(x)t - \delta(x)=0. 
$$
In order for $G$ to act on such an extension such that $\C t \cong \chi$, we need to calculate $g(tx - \sigma(x)t - \delta(x))$ for each $x \in A_1$. We have
\begin{align*}
g(tx - \sigma(x)t - \delta(x)) &= g(t)g(x) - g(\sigma(x))g(t) - g \delta(x) \\
                             &= \chi(g)t g(x) - \sigma(g(x)) \chi(g)t - \chi(g) \delta(g(x))\\
                             &=\chi(g)(tg(x) - \sigma(g(x))t- \delta(g(x))) = 0.
\end{align*}
For the other arrow, let $A[t;\sigma,\delta]$ be a $G$-algebra such that $\deg(t) = 1$ and $\C t \cong \chi$. Observe that the extra relations to get from $A$ to $A[t;\sigma,\delta]$ lie in the finite dimensional vector space $\C t \otimes V \oplus V \otimes \C t \oplus A_2$, which is also a direct sum as $G$-representations. These relations then form a $G$-subrepresentation if and only if the 2 maps
\begin{gather*}
\xymatrix{\C t \otimes V \ar[r]^-f& V \otimes \C t}\\
\xymatrix{\C t \otimes V \ar[r]^-g& A_2}
\end{gather*}
with $f$ mapping $t \otimes v$ to $\sigma(v) \otimes t$ and $g$ mapping $t \otimes v$  to $\delta(v) \in A_2$ are $G$-morphisms. So $\sigma \in \Aut_G(A)$ and $\delta$ fulfils the requirements of the proposition.
\end{proof}
Unfortunately, sometimes the only Ore extensions of $\C[V]$ we can make are those with $\delta = 0$.
\begin{proposition}
If $\sigma \in \prod_{i=1}^n\wis{GL}_{e_i}(\C)$ has no eigenvalue equal to 1 and if there is no $G$-submodule of $V$ isomorphic to $\chi$, then there exists no $0 \neq \delta \in \Der^\sigma(\C[V])$ fulfilling requirement \ref{req:ore}.
\label{prop:Ore}
\end{proposition}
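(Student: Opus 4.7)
The strategy is to use the $\sigma$-derivation axiom together with the commutativity of $\C[V]$ to force $\delta|_V$ into the rigid form $v \mapsto F\,\tau(v)$ for a single fixed element $F \in V$, where $\tau := 1-\sigma$, and then to invoke the $G$-equivariance requirement \ref{req:ore} to identify $\C F$ as a copy of $\chi$ inside $V$.

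First, since $\delta$ is determined by its restriction $\delta|_V : V \to S^2 V$, I apply $\delta$ to the defining relations $v_1v_2-v_2v_1=0$ of $\C[V]$. Expanding via $\delta(ab)=\sigma(a)\delta(b)+\delta(a)b$ and using commutativity in $\C[V]$ collapses the four resulting terms into
$$
\delta(v_1)\,\tau(v_2) \;=\; \delta(v_2)\,\tau(v_1)\qquad\text{in }S^3 V.
$$
By hypothesis $\sigma$ has no eigenvalue $1$, so $\tau \in \wis{GL}(V)$, and in particular $\tau(w) \neq 0$ for every nonzero $w \in V$.

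Next, I fix a nonzero $v_0 \in V$ and set $F := \delta(v_0)/\tau(v_0) \in \Frac(\C[V])$. Rearranging the identity above gives $\delta(w)\,\tau(v_0) = F\,\tau(v_0)\,\tau(w)$, hence $\delta(w) = F\,\tau(w)$ in $\C[V]$ for every $w \in V$. Writing $F=P/Q$ in lowest terms in the UFD $\C[V]$ and letting $w$ range over a basis of $V$, the image $\tau(w)$ runs through a spanning set of linear forms, so $Q$ must divide every linear form of $V$; therefore $Q$ is a unit and $F \in \C[V]$. Comparing degrees in $\delta(v_0) = F\,\tau(v_0)$ (with $\delta(v_0) \in S^2V$ and $\tau(v_0) \in V$) then forces $F$ to be homogeneous of degree $1$, i.e.\ $F \in V$.

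Finally, I apply the $G$-equivariance. Because $\sigma \in \Aut_G(V)$, the map $\tau$ is also $G$-equivariant, so condition \ref{req:ore} yields
$$
F\,g(\tau(v)) \;=\; \delta(gv) \;=\; \chi^*(g)\,g(F)\,g(\tau(v))
$$
for all $g \in G$ and $v \in V$; cancelling $g(\tau(v))$ for generic $v$ gives $g(F) = \chi(g)\,F$. Hence either $F=0$ or $\C F$ is a $G$-submodule of $V$ isomorphic to $\chi$, and the latter is excluded by hypothesis, so $F=0$ and $\delta=0$. I expect the only genuine obstacle to be the second step, where one has to check carefully that $F = \delta(v_0)/\tau(v_0)$ is actually a \emph{linear polynomial} rather than just a rational function; the remainder of the argument is essentially bookkeeping once the normal form $\delta(v)=F\,\tau(v)$ is available.
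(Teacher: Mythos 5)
Your proposal is correct and follows essentially the same route as the paper's proof: apply the $\sigma$-derivation rule to the commutation relations to get $\delta(x_i)(\sigma-1)(x_j)=\delta(x_j)(\sigma-1)(x_i)$, use that $\sigma-1$ is invertible together with the UFD property of $\C[V]$ to obtain $\delta(x)=F\,(\sigma-1)(x)$ with $F\in V$ (a point the paper states more tersely and you justify in more detail), and then use requirement \ref{req:ore} and $G$-equivariance of $\sigma-1$ to conclude $g(F)=\chi(g)F$, which forces $F=0$ when $\chi$ does not occur in $V$. The only cosmetic difference is your sign convention $\tau=1-\sigma$ versus the paper's $\sigma-1$, which does not affect the argument.
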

\begin{proof}
Let $V = \oplus_{i=0}^{n-1}\C x_i$ as vector space. From the definition of a $\sigma$-derivation, we need
$$
\forall 0 \leq i,j \leq n-1: \sigma(x_i)\delta(x_j) + x_j \delta(x_i) = \delta(x_i x_j) = \delta(x_j x_i) = \sigma(x_j)\delta(x_i)+x_i \delta(x_j).
$$
From which it follows that
$$
(\sigma(x_i)-x_i)\delta(x_j) = (\sigma(x_j)-x_j)\delta(x_i).
$$
As $\sigma$ has no eigenvalue equal to 1, the elements $v_i = \sigma(x_i)-x_i=(\sigma -1)(x_i),i=1\ldots n$ also form a basis of $V$ and we have
$$
v_i \delta (x_j)= v_j \delta(x_i).
$$
Using the fact that $\C[V]$ is an UFD and that $v_1,\ldots,v_n$ form a basis, it follows that $\delta(x_i) = v v_i$ for some $v \in V$, so $\delta(x_i)=v(\sigma - 1)(x_i)$. By requirement \ref{req:ore}, we need that
$$
v (\sigma-1)(g(x)) =\delta(g(x)) =  \chi^*(g)g(\delta(x))=\chi^*(g) g(v)g((\sigma-1)(x)),
$$
from which it follows that
$$
g(v) = \chi(g) v.
$$
So if $\chi$ is not a subrepresentation of $V$, then necessarily $\delta = 0$.
\end{proof}

\section{$S_{n+1}$-deformations of $\C[V]$}
Let $V = S \oplus T$ be the $n+1$-dimensional permutation representation of $S_{n+1}$ with $S$ $n$-dimensional and $T$ the trivial representation. We have the following proposition.
\begin{proposition}
Let $n \geq 2$. The $S_{n+1}$-deformations up to degree 2 of $\C[V]$ are parametrized  by $\PP^2$.
\end{proposition}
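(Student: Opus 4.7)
The plan is to apply directly the general description of $\V_2$ from Section 2: one has
$$
\V_2 \cong \prod_i \Grass(f_i, a_i),
$$
where the product runs over isomorphism classes of simple $S_{n+1}$-representations, $a_i$ is the multiplicity of the $i$-th simple in $V \otimes V$ and $f_i$ its multiplicity in the kernel $V \wedge V$ of $T(V)_2 \twoheadrightarrow \C[V]_2 = S^2 V$. The entire proof thus reduces to decomposing $V \otimes V$ and $V \wedge V$ as $S_{n+1}$-modules and reading off the pairs $(a_i, f_i)$.

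First I would write $V = S \oplus T$ with $S$ the $n$-dimensional standard representation (partition $(n,1)$) and $T$ the trivial one, and use the identities
$$
S^2(S \oplus T) = S^2 S \oplus S \oplus T, \qquad \wedge^2(S \oplus T) = \wedge^2 S \oplus S
$$
to reduce everything to $S \otimes S$. For $n \geq 3$ the classical decompositions
$$
S^2 S \cong T \oplus S \oplus W_{(n-1,2)}, \qquad \wedge^2 S \cong W_{(n-1,1,1)},
$$
with $W_\lambda$ the Specht module attached to a partition $\lambda \vdash n+1$, can either be quoted or verified by a quick character computation using $\chi_S(g) = \mathrm{fix}(g) - 1$ combined with $\chi_{S^2 S}(g) = \tfrac{1}{2}(\chi_S(g)^2 + \chi_S(g^2))$. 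Substituting gives
$$
V \otimes V \cong T^{\oplus 2} \oplus S^{\oplus 3} \oplus W_{(n-1,2)} \oplus W_{(n-1,1,1)}, \quad V \wedge V \cong S \oplus W_{(n-1,1,1)},
$$
so the multiplicity pairs $(a_i, f_i)$ are $(2,0), (3,1), (1,0), (1,1)$ for $T, S, W_{(n-1,2)}, W_{(n-1,1,1)}$ respectively. Every Grassmannian factor except $\Grass(1,3)$ collapses to a point, so $\V_2 \cong \Grass(1,3) = \PP^2$.

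The boundary case $n=2$ must be handled separately, since the partitions $(n-1,2)$ and $(n-1,1,1)$ degenerate. A direct character computation in $S_3$ shows $S \otimes S \cong T \oplus S \oplus \mathrm{sgn}$, whence $V \otimes V \cong T^{\oplus 2} \oplus S^{\oplus 3} \oplus \mathrm{sgn}$ and $V \wedge V \cong S \oplus \mathrm{sgn}$, yielding pairs $(a_i, f_i) = (2,0), (3,1), (1,1)$ for $T, S, \mathrm{sgn}$ and again $\V_2 \cong \Grass(1,3) = \PP^2$. The only real obstacle is bookkeeping of the Specht-module decompositions of $S^2 S$ and $\wedge^2 S$ together with the $n=2$ boundary; once these are settled, the statement is immediate from the general grassmannian description of $\V_2$.
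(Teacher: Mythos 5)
Your proof is correct and follows essentially the same route as the paper: decompose $V \otimes V$ and $V \wedge V$ into simple $S_{n+1}$-modules (via $S^2S \cong T \oplus S \oplus W$ and $\wedge^2 S$ simple) and read off $\V_2 \cong \Grass(1,3) \cong \PP^2$ from the general Grassmannian description of Section 2. Your separate treatment of $n=2$, where $W_{(n-1,2)}$ does not exist, is in fact slightly more careful than the paper (whose stated decomposition of $S \otimes S$ literally holds only for $n \geq 3$, though the conclusion is unaffected); the paper additionally records the explicit relations in the basis $y_i = x_0 - x_i$, $v = \sum_i x_i$, which is used later in the text but is not needed for the parametrization itself.
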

\begin{proof}
We have $ V\otimes V = S \otimes S \oplus S^{\oplus 2} \oplus T$ and $V \wedge V = S \wedge S \oplus S$. $S \otimes S = S\wedge S \oplus T \oplus S \oplus W$ with $W$ simple, $W \neq S, S \wedge S$ according to \cite[Exercise 4.19]{fulton1991representation}. Then the $S_{n+1}$-deformations of $\C[V]$ are parametrized by $\Emb_{S_{n+1}}(S \wedge S \oplus S, S \wedge S \oplus S^{\oplus 3}) \cong \PP^2$. Let $V = \oplus_{i=0}^n \C x_i$ such that 
$$
\forall \sigma \in S_{n+1}: \sigma(x_i)= x_{\sigma(i)},
$$
put $y_i = x_0 - x_i, 1\leq i \leq n$ and let $v = \sum_{i=0}^n x_i$. Then for $P=[A:B:C] \in \PP^2$, the relations of a $S_{n+1}$-deformation $A_P$ of $\C[V]$ correspond to
\[
\begin{cases}
A y_i v + B v y_i + C y_i\left( (n-1)y_i-2\sum_{j=1,j\neq i}^n y_j\right)=0, 1 \leq i \leq n,\\
[y_i,y_j] = 0, 1 \leq i < j \leq n.
\end{cases}
\]
The theorem follows.
\end{proof}
From the relations, it generically follows that these algebras are Ore extensions of the commutative ring generated by $y_1,\ldots,y_n$. Unfortunately, if $\frac{B}{A}\neq -1$ and $C \neq 0$, the algebra $\C[y_1,\ldots,y_n]$ is not a commutative polynomial, as the corresponding automorphism of $\C[y_1,\ldots,y_n]$ does not have eigenvalue 1 as in Proposition \ref{prop:Ore}.
\par Therefore, the $S_{n+1}$-deformations of $\C[V]$ are parametrized by 2 lines, one corresponding to the differential polynomial rings $\C[y_1,\ldots,y_n][v,\delta_\alpha]$ with $\delta_a(y_i)=a y_i\left( (n-1)y_i-2\sum_{j=1,j\neq i}^n y_j\right)$ and one corresponding to a skew polynomial ring $\C[y_1,\ldots,y_n][x;\sigma_a]$ with $\sigma_a(y_i) = a y_i$.
\begin{theorem}
The Artin-Schelter regular algebras of global dimension $n+1$ which are $S_{n+1}$-deformations of the polynomial ring $\C[V]$ and are domains are parametrized by $\mathbf{V}(xy)\subset \A^1$, with one line corresponding to quantum algebras and the other line corresponding to differential polynomial rings.
\end{theorem}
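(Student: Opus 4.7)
My plan is to exploit the explicit $\PP^2$-parametrization from the preceding proposition and cut out the locus of AS-regular domains inside it. For a typical point $[A:B:C]\in\PP^2$ with the listed relations, the subalgebra generated by $y_1,\ldots,y_n$ is commutative because of the relations $[y_i,y_j]=0$, and (generically, for $B\neq 0$) the remaining relations can be rewritten as
\[
vy_i = -\tfrac{A}{B}y_iv - \tfrac{C}{B}y_i\Bigl((n-1)y_i - 2\sum_{j\neq i} y_j\Bigr),
\]
which is exactly the defining relation of an Ore extension $\C[y_1,\ldots,y_n][v;\sigma,\delta]$ with $\sigma(y_i)=-\tfrac{A}{B}y_i$ and $\delta$ read off from the quadratic term. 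So the strategy is to use Ore-extension theory to detect AS-regularity and the domain property.

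The central step is to invoke Proposition~\ref{prop:Ore}. Here one takes $V=S$, and the character $\chi$ under which $v$ transforms is trivial (since $v$ is $S_{n+1}$-fixed), while $S$ is simple and nontrivial, so contains no trivial subrepresentation. The automorphism $\sigma=-\tfrac{A}{B}\mathrm{Id}_S$ has no eigenvalue $1$ precisely when $A+B\neq 0$, and in that case Proposition~\ref{prop:Ore} forces $\delta=0$, i.e.\ $C=0$. This identifies two projective lines inside $\PP^2$: the quantum line $\{C=0\}$ producing skew polynomial rings $\C[y_1,\ldots,y_n][v;\sigma]$, and the differential line $\{A+B=0\}$ on which $\sigma=\mathrm{Id}$ and one obtains differential polynomial rings $\C[y_1,\ldots,y_n][v;\delta]$. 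These two lines meet transversally at the single point $[1:-1:0]$, which corresponds to the commutative polynomial ring $\C[V]$ itself, thus giving the nodal structure $\mathbf{V}(xy)\subset\A^2$ claimed in the statement.

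To finish I would verify two things: (i) every algebra on either of the two lines is AS-regular of global dimension $n+1$ and a domain, and (ii) every other point of $\PP^2$ fails one of these properties. For (i) I would invoke the standard results that an Ore extension of an AS-regular domain is again an AS-regular domain of one higher global dimension, applied to $\C[y_1,\ldots,y_n]$. For (ii), the generic case $B\neq 0$ with $A+B\neq 0$ and $C\neq 0$ is handled by the argument above; for the degenerate coordinate points such as $[1:0:0]$, $[0:1:0]$ or $[0:0:1]$, the relations directly produce obvious zero-divisor identities like $y_iv=0$, $vy_i=0$ or $y_i((n-1)y_i-2\sum_{j\neq i}y_j)=0$, ruling out the domain property.

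The main obstacle, in my view, will be the uniform treatment of the locus $B=0$, where the Ore-extension rewriting above is not valid and Proposition~\ref{prop:Ore} does not apply as stated. Here I expect to analyse the degree-$3$ consequences of the relations by hand, and to use Corollary~\ref{cor:constchar} together with the Hilbert series $\frac{1}{(1-t)^{n+1}}$ as a diagnostic: whenever the relations force a nontrivial degree-$3$ collapse of some $S_{n+1}$-isotypic component, the algebra cannot have the correct Hilbert series and hence is not AS-regular of dimension $n+1$. Combined with (i) and (ii), this reduces the moduli of AS-regular domains to the union of the two lines meeting at $[1:-1:0]$, as claimed.
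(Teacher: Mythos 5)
Your proposal is correct and follows essentially the same route as the paper: use the explicit $\PP^2$-parametrization, invoke Proposition~\ref{prop:Ore} to cut the locus down to the quantum line $\{C=0\}$ and the differential line $\{A+B=0\}$ meeting at the commutative point, and obtain AS-regularity and the domain property on these lines from Ore-extension theory over $\C[y_1,\ldots,y_n]$ (the paper does this by citing Phan's result on graded Ore extensions), discarding the degenerate points by exhibiting zero divisors. The only real deviation is that the locus $B=0$, which you flag as the main obstacle requiring a degree-3/Hilbert-series analysis, is actually the easiest case: there the relations read $y_i(Av+Cf_i)=0$, giving zero divisors in degree 1 exactly as at the coordinate points, which is precisely how the paper dismisses these ``points at infinity.''
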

\begin{proof}
Both lines define Ore extensions of the polynomial ring $\C[S]$, so we can apply the results of \cite{phan2012yoneda}. The points at infinity will be domains, as there will be zero divisors in degree 1. 
\end{proof}
The two strata we will study will correspond to the algebras with relations
\[
\begin{cases}
y_i v - a v y_i, 1 \leq i \leq n,\\
[y_i,y_j] = 0, 1 \leq i < j \leq n,
\end{cases}
\]
and
\[
\begin{cases}
vy_i-y_iv= c y_i\left( (n-1)y_i-2\sum_{j=1,j\neq i}^n y_j\right), 1 \leq i \leq n,\\
[y_i,y_j] = 0, 1 \leq i < j \leq n.
\end{cases}
\]

\subsection{Classification of the simple objects in $\wis{Proj}(\mA)$}
Let $\mA$ be a AS-regular $S_{n+1}$-deformation of $\C[V]$. In order to calculate the simple elements of $\wis{Proj}(\mA)$, we can use that in both cases, the sequence $y_1,y_2,\ldots,y_n$ is a normalizing, regular sequence.
\subsubsection{The Skew polynomial case}
Here we can use the results of \cite{2015point}.
\begin{theorem}
$\mA$ is a twist of the polynomial ring $\C[V]$. Consequently, $\wis{Proj}(\mA) \cong \PP^{n}$.
\end{theorem}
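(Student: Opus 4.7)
The plan is to exhibit an explicit graded automorphism $\sigma$ of $\C[V]=\C[y_1,\ldots,y_n,v]$ whose Zhang twist recovers $\mA$, and then invoke the equivalence $\wis{Proj}(\C[V])\cong\wis{Proj}(\C[V]^{\sigma})$.

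Concretely, I would define $\sigma\in \Aut(\C[V])$ on the degree $1$ part by $\sigma(v)=v$ and $\sigma(y_i)=a y_i$ for $1\le i\le n$, and extend multiplicatively. This is clearly a gradation-preserving automorphism. To check $\C[V]^\sigma\cong \mA$, write $*=*_\sigma$ for the twisted product. Since all $y_i,v$ live in degree $1$, one computes
\begin{align*}
y_i * v - a\, v * y_i &= y_i\sigma(v) - a\, v\sigma(y_i) = y_iv - a v(ay_i)/a = y_iv - v(ay_i)=0,
\end{align*}
(using commutativity in $\C[V]$) and analogously $y_i*y_j - y_j*y_i = y_i(ay_j) - y_j(ay_i)=0$. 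Hence the relations of $\mA$ are satisfied in $\C[V]^\sigma$ under $y_i\mapsto y_i$, $v\mapsto v$. Since both algebras have the same Hilbert series $\tfrac{1}{(1-t)^{n+1}}$ (AS-regular of global dimension $n+1$ for $\mA$, and the polynomial ring for $\C[V]^\sigma$, twists preserving Hilbert series), this surjection must be an isomorphism.

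With $\mA\cong \C[V]^\sigma$ in hand, I would apply Zhang's twisting theorem (the standard result that a Zhang twist by a graded algebra automorphism induces an equivalence of $\wis{Gr}$-categories, hence of $\wis{QGr}$-categories), which yields $\wis{Proj}(\mA)\cong\wis{Proj}(\C[V])$. Since $\C[V]$ is the commutative polynomial ring in $n+1$ variables, $\wis{Proj}(\C[V])\cong \PP^n$, which gives the claim.

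The only genuine subtlety is making sure that the twist conventions match: the definition used in the paper twists $a\in B_k$, $b\in B_l$ by $a*b=a\sigma^k(b)$, so I must be careful that in degree $1\times 1$ multiplication only one copy of $\sigma$ is inserted, which is exactly the verification done above. Everything else—the equivalence of $\wis{Proj}$ under twisting and the identification $\wis{Proj}(\C[V])=\PP^n$—is classical, so no further work is needed.
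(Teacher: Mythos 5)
Your overall strategy is exactly the paper's: exhibit an explicit graded automorphism of $\C[V]$ whose Zhang twist has the skew relations, then use the standard equivalence of $\wis{Proj}$ under twisting. However, the one computation you actually need to do is wrong as written. With your choice $\sigma(y_i)=a y_i$, $\sigma(v)=v$, the twisted products are $y_i * v = y_i\sigma(v)=y_i v$ and $v * y_i = v\sigma(y_i)=a\,v y_i$, so the twist satisfies $v * y_i = a\, y_i * v$, i.e.\ the relation $y_i v - a^{-1} v y_i=0$: the parameter comes out inverted. In your display the term $a\,v\sigma(y_i)$ equals $a^2 v y_i$, and the ``$/a$'' you insert to make it collapse to $y_i v$ has no justification; the identity $y_i * v - a\, v * y_i=0$ is simply false for your $\sigma$ unless $a^2=1$.

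The repair is immediate: twist instead by $\sigma(y_i)=a^{-1}y_i$, $\sigma(v)=v$, or (as the paper does) by $\beta(y_i)=y_i$, $\beta(v)=av$, which gives $y_i *_\beta v = y_i\beta(v)=a\,y_iv$ and $v *_\beta y_i = v\beta(y_i)=vy_i$, hence exactly $y_i *_\beta v - a\, v*_\beta y_i = 0$ together with $y_i *_\beta y_j = y_j *_\beta y_i$. With that corrected automorphism, the remainder of your argument — the map $\mA\to\C[V]^\beta$ sending generators to generators is surjective, equality of Hilbert series (clear since $\mA$ is the Ore extension $\C[y_1,\ldots,y_n][v;\sigma_a]$ and twisting preserves Hilbert series) forces it to be an isomorphism, and Zhang's theorem plus $\wis{Proj}(\C[V])\cong\PP^n$ finishes — is sound and coincides with the paper's proof, which consists of precisely this relation check.
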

\begin{proof}
Take $\beta (y_i) = y_i, \beta v = av$ with $a \in \C^*$. Then the twist $\C[V]^\beta$ has relations
$$
y_i *_\beta y_j = y_i y _j = y_j *_\beta y_i, y_i *_\beta v = a y_i v = a v*_\beta y_i, 1 \leq i < j  \leq n.
$$
which corresponds to one of the lines in $\PP^2$ we defined.
\end{proof}
\subsubsection{The differential polynomial ring case}
By rescaling $v$, we may assume that we are working with the derivation defined by
$$
\delta(y_i) = y_i\left((n-1)y_i-2\sum_{j=1,j\neq i}^n y_j\right)=y_i f_i, i = 1 \ldots n.
$$
\begin{theorem}
The simple objects of $\wis{Proj}(\mA)$ are parametrized by $2^{n}-1$ lines through one point in $\PP^{n} = \PP((\mA_1)^*)$. Each point on such a line corresponds to a point module of $\mA$. If $n$ is even, the only point module with (non-trivial) finite dimensional simple quotients is the intersection point of these lines. If $n$ is odd, then there are $\binom{n}{\frac{n+1}{2}}$-lines parametrizing $\C^*$-orbits of 1-dimensional representations, which are the only lines with non-trivial simple quotients. The equations for these lines are determined by
$$
\V(y_i y_j (y_i-y_j)|1\leq i < j \leq n)
$$
There are no other simple objects in $\wis{Proj}(\mA)$.
\end{theorem}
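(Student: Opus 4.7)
The plan is to compute the point scheme of $\mA$ directly from the defining relations, identify $1$-dimensional representations as the fixed points of the induced shift map on the point scheme, and then use the normalizing regular sequence $y_1,\ldots,y_n$ to rule out further simple objects.

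First I would parametrize a point module $M = \bigoplus_{k\ge 0}\C e_k$ by $y_i e_k = \alpha_i^{(k)} e_{k+1}$ and $v e_k = \beta^{(k)} e_{k+1}$. The commutation relations $[y_i,y_j]=0$, applied to $e_k$, force $\alpha_i^{(k)}\alpha_j^{(k+1)} = \alpha_j^{(k)}\alpha_i^{(k+1)}$, hence $\alpha_i^{(k+1)} = \lambda_k\alpha_i^{(k)}$ for a common scalar $\lambda_k$ whenever some $\alpha_i^{(k)}\neq 0$. Applying $[v,y_i] = y_i\bigl((n-1)y_i - 2\sum_{j\neq i} y_j\bigr)$ to $e_k$ yields
\[
\alpha_i^{(k)}\beta^{(k+1)} \;=\; \lambda_k\,\alpha_i^{(k)}\bigl(\beta^{(k)} + (n+1)\alpha_i^{(k)} - 2S^{(k)}\bigr),
\]
with $S^{(k)} = \sum_j \alpha_j^{(k)}$. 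Whenever $\alpha_i^{(k)}\alpha_{i'}^{(k)}\neq 0$, comparing the equations for $i$ and $i'$ gives $(n+1)(\alpha_i^{(k)} - \alpha_{i'}^{(k)}) = 0$, so all non-zero entries of $(\alpha_j^{(k)})$ must coincide. This is precisely the vanishing locus of $\{y_iy_j(y_i-y_j)\}_{1\le i<j\le n}$ in $\PP^n$, which decomposes as $2^n-1$ lines, one for each non-empty $J\subseteq\{1,\ldots,n\}$ (with $\alpha_i = c$ for $i\in J$, $\alpha_i = 0$ otherwise, and $\beta$ free), all of them passing through the point $[0{:}\cdots{:}0{:}1]$.

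On the line indexed by $J$, normalizing $\lambda_k=1$ (so that $c^{(k)}=c$ is constant) reduces the recursion to the affine translation $\beta^{(k+1)} = \beta^{(k)} + \mu_J c$ with $\mu_J = n+1 - 2|J|$. A fixed point occurs iff $\mu_J c = 0$: either $c=0$, giving the intersection point, or $\mu_J = 0$, which forces $n$ to be odd and $|J| = (n+1)/2$. At a fixed point the point module is constant and admits the ungraded surjection $e_k\mapsto 1$ onto the $1$-dimensional representation $\chi(y_i)=\alpha_i$, $\chi(v)=\beta$, providing the claimed non-trivial finite-dimensional simple quotient. For $n$ even only the intersection point yields a fixed orbit; for $n$ odd the $\binom{n}{(n+1)/2}$ lines with $|J|=(n+1)/2$ consist entirely of $\C^*$-orbits of characters, and no other point module has a non-trivial finite-dimensional simple quotient.

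Finally, to conclude that no further simple object exists in $\wis{Proj}(\mA)$, I would induct along the normalizing regular sequence $y_1,\ldots,y_n$: a simple object is either annihilated by some $y_i$, in which case it descends to $\wis{Proj}(\mA/(y_i))$, which is again an Ore extension of a polynomial ring in fewer variables of the same shape and satisfies the inductive hypothesis, or is faithful on the $y_i$'s and is then governed by the point-module recursion above, forcing it to lie on one of the lines; the base case $\mA/(y_1,\ldots,y_n) = \C[v]$ contributes exactly the intersection point. The main obstacle I foresee is this last inductive step: one must verify that every simple in $\wis{Proj}(\mA)$ is $1$-critical (equivalently, a shifted point module), which should follow from the AS-regularity of $\mA$ and the fact that the Ore extensions arising at each descent level are themselves AS-regular, and one must also check that the stratification of the point scheme along the hyperplanes $y_i = 0$ is compatible with the classification of lines produced in the inductive hypothesis.
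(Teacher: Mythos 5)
Your computation of the point scheme by multilinearization, the identification of the $2^n-1$ lines $\V(y_iy_j(y_i-y_j))$ through $[0:\cdots:0:1]$, and the analysis of the shift map as a translation on each line (fixed precisely when the number of nonvanishing coordinates is $\tfrac{n+1}{2}$, hence $n$ odd, giving $\binom{n}{(n+1)/2}$ lines of $\C^*$-orbits of one-dimensional representations) are correct and run parallel to the second half of the paper's argument. The problem is the step you yourself flag at the end: you need that every simple object of $\wis{Proj}(\mA)$ with Hilbert series $\frac{p}{1-t}$ has $p=1$, i.e.\ that there are no fat point modules, and you propose that this ``should follow from the AS-regularity of $\mA$.'' It does not: AS-regular algebras can and do have fat point modules (quantum planes and Sklyanin algebras at roots of unity are standard examples), so AS-regularity gives you no control over $p>1$, and your inductive alternative ``either annihilated by some $y_i$ or governed by the point-module recursion'' silently assumes the object is already a point module, which is exactly what has to be proved. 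As stated, the classification of \emph{all} simple objects is not established.

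The paper closes this gap with a concrete representation-theoretic argument that your proposal is missing. Since each $y_i$ is normalizing, a simple object not annihilated by $y_1$ corresponds to a finite-dimensional simple representation of the degree-zero part of the localization, $\mA[y_1^{-1}]_0$, which (after rescaling $v$) is the differential polynomial ring $\C[v_2,\ldots,v_n][w;\delta]$ with $\delta(v_j)=v_j(v_j-1)$, where $v_j=y_jy_1^{-1}$, $w=vy_1^{-1}$. One then proves a lemma: any finite-dimensional simple representation $\rho$ of $\C\langle x,u\rangle/(ux-xu-x(x-1))$ is one-dimensional with $\rho(x)\in\{0,1\}$; the proof sets $Y=1-\rho(x)^{-1}$ and observes that $[Y,U]=-Y$ makes $U$-eigenvalues shift under $Y$, impossible in finite dimension unless $Y$ is singular, after which the eigenspace decomposition for $\rho(x)$ forces $\rho(x(x-1))=0$. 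Applying this to each pair $(v_j,w)$ shows every finite-dimensional simple of $\mA[y_1^{-1}]_0$ is one-dimensional, so there are no fat points, and only then does your line-by-line analysis classify everything. If you want to keep your inductive framework along $y_1,\ldots,y_n$, you would still need this localization lemma at each stage (note also that $\mA/(y_i)$ is an Ore extension with derivation $y_j\mapsto y_j((n-1)y_j-2\sum_{k\neq j,i}y_k)$, which is not literally ``of the same shape'' in $n-1$ variables, though the argument adapts); without it, the main claim remains open.
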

\begin{proof}
Let $P \in \wis{Proj}(\mA)$ be a simple object, that is, $P$ is a graded module with Hilbert series $\frac{p}{(1-t)}$ for some $p \in \N, p \geq 1$ and each graded quotient of $P$ is finite dimensional. As each $y_i$ is normalizing, either $y_i \in \Ann(P)$ or $P$ corresponds to a simple $\mA[y_i^{-1}]_0$-representation, cfr. \cite{NastaFVO}. Assume that each $y_i\in \Ann(P)$, then $P$ is a $\mA/(y_1,\ldots, y_n) \cong \C[v]$-module and therefore $P \cong \C[v]$ as $\mA$-module.
\par Assume now that $y_1 \notin \Ann(P)$, then $P$ corresponds to a simple representation of $\mA[y_1^{-1}]_0$. Let $v_j = y_j y_1^{-1},2\leq j \leq n,w=vy_1^{-1}$. Then the relations of $\mA[y_1^{-1}]_0$ become $v_j v_k - v_k v_j=0$ and for $2\leq k \leq n$
\begin{align*}
w v_k &= v y_1^{-1} y_k y_1^{-1}\\
	  &= (y_1^{-1} v - \left((n-1)-2\sum_{j=2}^n y_jy_1^{-1}\right)) y_k y_1^{-1}\\
	  &=y_1^{-1}vy_ky_1^{-1}-(n-1)v_k+2\sum_{j=2}^n v_j v_k\\
	  &=y_1^{-1}(y_k v +y_k\left((n-1)y_k-2\sum_{j=1,j\neq k}^n y_j \right))y_1^{-1}-(n-1)v_k+2\sum_{j=2}^n v_j v_k\\
	  &=v_kw+(n-1)(v_k^2-v_k)-2\sum_{j=1,j\neq k}^n v_j v_k + 2\sum_{j=2}^n v_j v_k\\
	  &=v_kw + (n+1)v_k(v_k-1).
\end{align*}
Consequently, the 1-dimensional representations of $\mA[y_1^{-1}]_0$ correspond to the Zariski closed subset $\V(a_k(a_k-1)|2\leq k \leq n)\subset \A^n$, which is the union of $2^{n-1}$ lines.
\par We can now do the same for the algebra $(\mA[y_2^{-1}])_0$, which will give an additional $2^{n-1}$ lines. However, there are already $2^{n-1}-2^{n-2}$ lines found $\mA[y_1^{-1}]_0$ (those not annihilated by $y_1$), so we get $2^{n-2}$ new lines. By induction, we get in total
$$
\sum_{j=0}^{n-1} 2^{j} = 2^{n}-1
$$
lines. In $\PP^n$, each of these lines goes through the point $[0:0:\ldots:0:1]$, which will be the intersection point.
\par In order to prove that there are no fat point modules (simple objects with Hilbert series $\frac{p}{1-t}$, $p >1$), we will prove that $\mA[y_1^{-1}]_0$ has no $p$-dimensional representations for $p>1$. This will be enough, for $\mA[y_1^{-1}]_0 \cong \mA[y_i^{-1}]_0$ for any $1 \leq i \leq n$. We can rescale $v$ such that we are looking at the differential polynomial ring $\C[v_2,\ldots,v_n][w,\delta]$ with $\delta(v_i) = v_i(v_i-1)$.
\par First, we prove a lemma.
\begin{lemma}
Let $\rho$ be a simple representation of the algebra $\C\langle x,y \rangle/(ux-xu-x(x-1))$. Then $\rho$ is 1-dimensional and $\rho(x) = 0$ or $\rho(x)=1$.
\end{lemma}
\begin{proof}
If this is not so, then $\rho(x)-1$ and $\rho(x)$ are invertible. We then have for $Y = 1-\rho(x)^{-1}$ and $U = \rho(u)$
$$
YU - UY = \rho(u)\rho(x)^{-1}-\rho(x)^{-1}\rho(u) = -(1-\rho(x)^{-1})=-Y,
$$
so the couple $(U,Y)$ forms a representation of the 2-dimensional Heisenberg Lie algebra. If the lemma is not true, $Y$ is also invertible. Let $a$ be an eigenvector of $U$ with eigenvalue $\alpha$. We then have
$$
UYa = (YU+Y)a = (\alpha +1)Ya.
$$
But $\rho$ was finite dimensional, so $Y$ has a non-trivial kernel, which is a contradiction.
\par Let $\rho_0 = \{a \in \rho | \rho(x)a=0 \}$ and $\rho_1 = \{a \in \rho | \rho(x)a=a \}$ and take elements $a \in \rho_0$, $b \in \rho_1$. We then have
\begin{align*}
\rho(x)\rho(u)a = \rho(u)\rho(x)a - \rho(x)(\rho(x) - 1)a = 0 \\
\rho(x)\rho(u)b = \rho(u)\rho(x)b - \rho(x)(\rho(x) - 1)b = b 
\end{align*}
So $\rho = \rho_0$ and $\rho_1=0$ or vice versa as $\rho$ is simple, in both cases $\rho(x(x-1))=0$ and $\rho$ is actually a simple representation of $\C[x,u]/(x(x-1))$. The lemma follows.
\end{proof}
As the $v_j, 2 \leq j \leq n$ commute with each other, let $a$ be an eigenvector of each $v_j$ with eigenvalue either 1 or 0 for each $j$, so $v_j a = \alpha_j a$ with $\alpha_j = 0$ or $\alpha_j = 1$ for some simple representation $\rho$ of $\mA[y_1^{-1}]_0$. We then have
$$
v_j w a = w v_j a + (v_j)(v_j-1)a = v_j w a.
$$
As $\mA[y_1^{-1}]_0 a = \rho$, it follows that the images of $w$ and $w_j$ commute, which means that $\rho$ is only simple if $\rho$ is 1-dimensional.
\par Now we need to check which of these point modules have simple quotients. This amounts to check which of these point modules have under the shift functor $\xymatrix{\wis{Proj}(\mA) \ar[r]^-{[1]}&\wis{Proj}(\mA)}$ a finite orbit. Equivalently, as $\mA$ is Artin-Schelter regular, we need to check which orbits of the associated automorphism $\phi$ on the point variety are finite, cfr. \cite{smith19924}. It is clear that under $\phi$ the intersection point is sent to itself, as it is the only singular point of the point variety. Let $P$ be a point module, not the intersection point, then $P$ corresponds to a set $T \subset \{1,\ldots,n\}, T \neq \{1,\ldots,n\}$ such that
$ y_i \in \Ann(P) \Leftrightarrow i \in T$. In addition, we have $y_i,y_j \notin \Ann(P) \Rightarrow y_i-y_j \in \Ann(P)$. Let $[a_1:\ldots:a_n:r]$ be the corresponding point of $\PP^n$, so $a_i = 0 \Leftrightarrow i \in T$ and $i,j \not\in T \Rightarrow a_i = a_j$ and $r$ arbitrary. We need to find $[b_1:\ldots:b_n,s]$ such that the following equations hold
$$
\begin{cases}
rb_i-a_is = a_i\left((n-1)b_i-2\sum_{j=1,j\neq i}^n b_j \right), 1\leq i \leq n,\\
a_ib_j-a_jb_i=0, 1\leq i < j \leq n.
\end{cases}
$$
From the second set of equations, it follows that $b_i = a_i \forall 1\leq i \leq n$. We may pick $a_i= b_i = 1$ if $i \not\in T$. Consequently, the first set of equations are fulfilled if $i \in T$. If $i \not\in T$, then we have
$$
s = r-\left((n-1)-2\sum_{j=1,j\neq i}^n a_j \right)=r-n-1+2|T|
$$
If $|T| \neq \frac{n+1}{2}$, this shows that $\phi$ is a translation on $\PP^1_{[a_i:r]}$, which fixes one point (the intersection point of all the lines). However, if $|T| = \frac{n+1}{2}$ (so $n$ has to be odd), then $\phi$ fixes each point on the corresponding line, so this will give a line parametrizing $\C^*$-orbits of 1-dimensional representations of $\mA$.
\end{proof}
For $n=2,3$, we can make things more explicit.
\begin{example}
As $S_3 \cong D_3 = \langle e_1,e_2 | e_1^3=e_2^2=1, e_2e_1e_2=e_1^2\rangle$, the differential polynomial ring $\mA$ we are studying is isomorphic to $\C\langle x,y,t\rangle/(R)$ with relations
$$
\begin{cases}
xy-yx=0,\\
xt-tx=y^2,\\
yt-ty=x^2.
\end{cases}
$$ 
In this case, the point modules can be computed by the method of multilinearization as the zeroset of the polynomial
$$
\det\left(\begin{bmatrix}
-y_0 & x_0 & 0 \\ -t_0 & -y_0 & x_0 \\ -x_0& -t_0 & y_0 
\end{bmatrix}\right)=y_0^3-x_0^3.
$$
This is indeed the union of three lines through the point $[0:0:1]$. If we take for example the line $x_0 = y_0$, then the automorphism defined by this algebra on $\V(x_0-y_0)$ is given by $[x_0:x_0:t_0]\mapsto [x_0:x_0:t_0+x_0]$, which is clearly an automorphism of infinite order and fixes only one point.
\par The fact that all 1-dimensional simple representations come from the intersection point follows immediately as we have $\mA/([x,t],[y,t])\cong \C[a,b,c]/(a^2,b^2)$.
\end{example}
\begin{example}
For $n=3$, we have the isomorphism $S_4 \cong (\Z_2 \times \Z_2) \rtimes S_3$. Let $e_1,e_2$ be the natural generators of $\Z_2 \times \Z_2$. If $V=S \oplus T$ is the permutation representation of $S_4$, then we can find a basis $\{v_{i,j}|0\leq i,j \leq 1\}$ of $V$ such that
$$
e_1 \cdot v_{i,j} = (-1)^i v_{i,j}, e_2 \cdot v_{i,j} = (-1)^j v_{i,j}.
$$
As $S_4$-module, the decomposition is given by $\C v_{0,0} \oplus S_4 \cdot v_{1,0}$. Using this basis, the differential polynomial ring has relations (under a suitable isomorphism)
$$
\begin{cases}
[v_{1,0},v_{0,1}]=[v_{1,1},v_{1,0}]=[v_{0,1},v_{1,1}]=0,\\
v_{1,0} v_{0,0} - v_{0,0} v_{1,0}= v_{0,1}v_{1,1},\\
v_{0,1} v_{0,0} - v_{0,0} v_{0,1}= v_{1,0}v_{1,1},\\
v_{1,1} v_{0,0} - v_{0,0} v_{1,1}= v_{1,0}v_{0,1}.
\end{cases}
$$
Again using multilinearization, we find that the point modules are parametrized by 7 lines, given by the Zariski closed subset in $\PP^3$ defined by
$$
\V\left(V_{1,1}(V_{1,0}^2-V_{0,1}^2),V_{1,0}(V_{0,1}^2-V_{1,1}^2),V_{0,1}(V_{1,0}^2-V_{1,1}^2)\right).
$$
The point modules with 1-dimensional simple quotients can again be easily found, by taking the quotient $\mA/([\mA,\mA])\cong \C[a,b,c,d]/(bc,bd,cd)$. This ring is clearly the coordinate ring of 3 affine planes, intersecting 2 by 2.
\end{example}

\end{document}